\def\R{{\mathbb R}}
\def\Z{{\mathbb Z}}
\def\N{{\mathbb N}}
\def\omiga{\omega}
\def\huaA{{\mathcal A}}
\def\Smn{S_{m,N}}
\def\lSmn{{\lambda}S_{m,N}}
\def\jsJ{\left|J\right|}
\def\jslJ{\left|\lambda{S_{m,N}}\right|}
\def\jssmn{\left|S_{m,N}\right|}
\def\kaf{\mathcal{X}}
\def\pfzy{\frac{1}{p}}
\def\M{\mathcal{M}}
\def\xieg{\setminus}
\def\kdjdz{\left|k\right|}
\def\R{{\mathbb R}}
\def\Z{{\mathbb Z}}
\def\z+{{\mathbb Z}_+}
\def\N{{\mathbb N}}
\def\bint{{\ifinner\rlap{\bf\kern.30em--}
\int\else\rlap{\bf\kern.35em--}\int\fi}\ignorespaces}
\def\sbint{{\ifinner\rlap{\bf\kern.32em--}
\hspace{0.078cm}\int\else\rlap{\bf\kern.45em--}\int\fi}\ignorespaces}
\newtheorem{theorem}{Theorem}[section]
\newtheorem{lemma}[theorem]{Lemma}
\newtheorem{proposition}[theorem]{Proposition}
\theoremstyle{definition}
\newtheorem{remark}[theorem]{Remark}
\newtheorem{definition}[theorem]{Definition}
\numberwithin{equation}{section}
\numberwithin{equation}{section}
\numberwithin{equation}{section}
\begin{document}

\arraycolsep=1pt

\title{\Large\bf The Hardy-Littlewood Maximal Operator on Discrete Weighted Morrey Spaces\footnotetext{\hspace{-0.35cm} {\it 2020
Mathematics Subject Classification}. {46B45, 42B35, 42B25.}
\endgraf{\it Key words and phrases.} weight, discrete Morrey space, discrete Hardy-Littlewood maximal operator, discrete Calder\'on-Zygmund decomposition.
\endgraf This project is supported by the National Natural Science Foundation of China (Grant No. 12261083).
\endgraf $^\ast$\,Corresponding author.
}}
\author{Xuebing Hao, Shuai Yang and Baode Li$^\ast$}
\date{ }
\maketitle

\vspace{-0.8cm}

\begin{center}
\begin{minipage}{13cm}\small
{\noindent{\bf Abstract.}
In this paper, we introduce a discrete version of weighted Morrey spaces, and discuss the inclusion relations of these spaces. In addition, we obtain the boundedness of discrete weighted Hardy-Littlewood maximal operators on discrete weighted Lebesgue spaces by establishing a discrete Calder\'on-Zygmund decomposition for weighted $l^1$-sequences. Furthermore, the necessary and sufficient conditions for the boundedness of the discrete Hardy-Littlewood maximal operators on discrete weighted Morrey spaces are discussed. Particularly, necessary and sufficient conditions are also discussed for the discrete power weights.}
\end{minipage}
\end{center}

\section{Introduction\label{s1}}
In 1938, the classical Morrey spaces were introduced by Morrey in \cite{dimorrey} to investigate the local behavior of solutions to second order elliptic partial differential equations. In 1987, Chiarenza and Frasca \cite{1987} showed the boundedness of the Hardy-Littlewood maximal operator on the Morrey spaces. In 2009, Komori and Shirai \cite{jdM} defined a weighted Morrey space and investigated the boundedness of the Hardy-Littlewood maximal operator on this space.

Let $m=(m_1,\,\cdots,\,m_d)\in \Z^d$, $N \in\N$ and $S_{m,N}:=\{k\in\Z^d: \|k-m\|_\infty\le N\}$, where as usual $\|(m_1,\,\cdots,\,m_d)\|_\infty:=\max\{|m_i|: 1\le i\le d\}$ for every $m\in \Z^d$. Then $\left| S_{m,N} \right|=(2N+1)^d$$-$the cardinality of $S_{m,N}$.

\begin{definition}
Let $1\le p\le q<\infty$. The {\it discrete Morrey space} is defined by $l^p_q=l^p_q(\Z)$ the set of sequences $x=\{x(k)\}_{k\in\Z}$ taking values in $\R$ such that
\begin{equation*}
\|x\|_{l^p_q}:=\sup\limits_{m\in\Z,N\in\N}|S_{m,N}|^{\frac{1}{q}-\frac{1}{p}}\left(\sum\limits_{k\in S_{m,N}}|x(k)|^p\right)^\pfzy<\infty.
\end{equation*}
\end{definition}

In 2018, Gunawan, Kikianty and Schwanke \cite{lsmyxz18} studied the discrete Morrey spaces and their generalizations. In addition, discrete Morrey spaces can be used to study the properties of discrete operators and the properties of the spaces themselves. For more results on discrete Morrey spaces, we lead the reader to \cite{2022Abe,2022A,2021A,2020H,2019K,2023W}.

\begin{definition}
Let $x=\{x(k)\}_{k\in\Z}\subset\R$ be a sequence. The {\it discrete Hardy-Littlewood maximal operator} $\M$ is defined by
\begin{equation*}
\M x(m):=\sup_{N\in \N}\frac{1}{\jssmn}\sum_{k\in \Smn}\left|x(k)\right|,\quad m\in \Z.
\end{equation*}
\end{definition}

In 2019, Gunawan and Schwanke \cite{jdszzlsm} discussed the boundedness of the discrete Hardy-Littlewood maximal operators on discrete Morrey spaces of arbitrary dimension.

\begin{theorem}\label{1.1}
Let $1<p\le q<\infty$. For all $x\in l^p_q(\Z^d)$ we have $\M x\in l^p_q(\Z^d)$ and there exists a constant $C>0$ such that $\|\M x\|_{l^p_q(\Z^d)}\le C\|x\|_{l^p_q(\Z^d)}$ holds for all $x\in l^p_q(\Z^d)$.
\end{theorem}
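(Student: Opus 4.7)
The plan is to mimic the classical proof of the Chiarenza--Frasca theorem on Morrey spaces, adapted to the discrete setting. Fix $m\in\zn$ and $N\in\nn$, and split the input as
$$
x = x_1 + x_2, \qquad x_1 := x\,\chi_{S_{m,2N}},\quad x_2 := x - x_1,
$$
so by sublinearity $\M x(k)\le \M x_1(k)+\M x_2(k)$ for every $k\in\zn$. It then suffices to bound the quantity
$\jssmn^{1/q-1/p}\bigl(\sum_{k\in\Smn}|\M x_i(k)|^{p}\bigr)^{1/p}$
uniformly in $m,N$ for $i=1,2$.

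For the local piece $x_1$, I would invoke the classical $l^p$-boundedness of the discrete Hardy--Littlewood maximal operator on $\zn$ (which follows from its weak $(1,1)$ bound via Marcinkiewicz interpolation, or equivalently from the continuous theorem after embedding). This gives
$\sum_{k\in\Smn}|\M x_1(k)|^p \le \sum_{k\in\zn}|\M x_1(k)|^p \ls \sum_{k\in S_{m,2N}}|x(k)|^p$,
and since $|S_{m,2N}|\sim |S_{m,N}|$, the definition of the Morrey norm immediately yields the bound $\ls\|x\|_{l^p_q}$.

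The nonlocal piece $x_2$ is where one has to work. Since $x_2$ is supported in $\zn\xieg S_{m,2N}$, for any $k\in\Smn$ only cubes $S_{k,N'}$ meeting $S_{m,2N}^{c}$ contribute to $\M x_2(k)$; a geometric check shows this forces $N'>N$, and in that case $S_{k,N'}\subset S_{m,N+N'}$ with $|S_{m,N+N'}|\sim (N')^{d}$. Using Hölder's inequality on the average and the Morrey control $\bigl(\sum_{j\in S_{m,N+N'}}|x(j)|^{p}\bigr)^{1/p}\le |S_{m,N+N'}|^{1/p-1/q}\|x\|_{l^p_q}$, I obtain the pointwise estimate
\begin{equation*}
\M x_2(k)\ls \sup_{N'\ge N}\frac{|S_{m,N+N'}|^{1-1/q}}{|S_{k,N'}|}\,\|x\|_{l^p_q}
\ls N^{-d/q}\,\|x\|_{l^p_q}\sim \jssmn^{-1/q}\|x\|_{l^p_q},
\end{equation*}
uniformly in $k\in\Smn$. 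Summing the $p$-th power over $\Smn$ gives
$\bigl(\sum_{k\in\Smn}|\M x_2(k)|^{p}\bigr)^{1/p}\ls |S_{m,N}|^{1/p-1/q}\|x\|_{l^p_q}$, which is exactly the required bound.

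The only real obstacle is the pointwise estimate for $\M x_2$: one must verify that the supremum defining $\M x_2(k)$ is effectively taken over $N'\ge N$, and that the ratio $|S_{m,N+N'}|^{1-1/q}/|S_{k,N'}|$ is decreasing in $N'$ precisely because $q\ge p>1$ (so the exponent $d(1-1/q)-d$ is strictly negative). Both facts are elementary but are the place where the hypothesis $p\le q$ and the comparability of the cardinalities $|S_{k,N'}|$ and $|S_{m,N+N'}|$ enter the argument. Once these are in hand, combining the two estimates and taking the supremum over $m,N$ completes the proof.
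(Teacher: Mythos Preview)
The paper does not give its own proof of Theorem~\ref{1.1}; the result is quoted from Gunawan and Schwanke \cite{jdszzlsm} as background for the weighted theory. Your argument---splitting $x$ into a local piece on $S_{m,2N}$ handled by the $l^p$-boundedness of $\M$ and a far-away piece controlled pointwise via H\"older and the Morrey norm---is correct and is precisely the discrete Chiarenza--Frasca proof carried out in \cite{jdszzlsm}. One small inaccuracy in your closing paragraph: the monotonicity of $|S_{m,N+N'}|^{1-1/q}/|S_{k,N'}|$ in $N'$ follows simply from $q<\infty$ (the exponent is $-d/q<0$), not from $q\ge p$; the hypothesis $1<p\le q$ enters only through the nontriviality of the Morrey space and the availability of the strong $l^p$-bound for $\M$.
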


On the other hand, it is very important to study weighted estimates for all kinds of discrete operators in harmonic analysis. On the discrete weighted $l^p$ spaces, in 2021, when $\Z$ is restricted to $\Z_+$, the boundedness of the discrete Hardy-Littlewood maximal operators was obtained via Hardy operator by Saker and Agarwal \cite{lsq}. For more studies of discrete weighted Lebesgue spaces, we refer the reader to \cite{wiener20,fu,2019sa,2021sa}. However, discrete weighted Morrey spaces have not been studied.

Thus, we generate the following natural question:
\begin{itemize}
\item[$Q).$] Is it possible to prove Theorem \ref{1.1} for weighted sequences in the discrete weighted Morrey spaces when $d=1$ ?
\end{itemize}

Our aim is to give an affirmative answer to the question which promotes the development of the applications of discrete Muckenhoupt weights and discrete Morrey spaces on discrete harmonic analysis.

This article is organized as follows. In Sect.\,\ref{s2}, we introduce the discrete weighted Morrey space, discrete $\huaA_p$ weight and discrete Hardy-Littlewood maximal operator. Then we prove some inclusion relations of discrete weighted Morrey spaces and discrete $\huaA_p$ weights. In Sect.\,\ref{s3}, inspired by the continuous Calder\'on-Zygmund decomposition for weighted $L^1(\R^n)$ of Garcia-Cuerva and Rubio de Francia \cite{norm}, we further give the discrete Calder\'on-Zygmund decomposition for weighted $l^1(\Z)$ (see Theorem \ref{t3.2} below). As applications, in Sect.\,\ref{s4.1}, we obtain strong type and weak type inequalities for the discrete weighted Hardy-Littlewood maximal operators on discrete weighted Lebesgue spaces, and the proofs are quite different from the one restricted to $\Z_+$ (\cite[Lemma 3]{lsq}). Precisely, Yang inequality and some properties of Hardy operator are crucial tools for the bounded results on $\Z_+$ while the discrete Calder\'on-Zygmund decomposition plays an important role for the bounded results on $\Z$. In Sect.\,\ref{s4.2}, applying these results in Sect.\,\ref{s4.1}, we further obtain the necessary and sufficient conditions for the boundedness of the discrete Hardy-Littlewood maximal operators on discrete weighted Morrey spaces. In Subsect.\,\ref{s4.2.2}, lighted by \cite{2020D}, the necessary and sufficient conditions are also obtained for the power weights.

\section*{Notation}

\ \,\ \ $-\Z:$ set of integers;

$-\Z_+:=\{1,2,3,\cdots\}$;

$-\N:=\{0,1,2,\cdots\}$;


$-\kaf_{J}:$ the characteristic function of interval $J$;

$-r':$ the conjugate exponent of $r$, namely, $\frac{1}{r}+\frac{1}{r'}=1$;

$-C:$ a positive constant which is independent of the main parameters, but it may vary from line to line.




\section{Preliminaries \label{s2}}
In this section, we first introduce the discrete weighted Morrey space and discrete Muckenhoupt class. Then we discuss some properties of these spaces and Muckenhoupt classes. Let us begin with the definition of discrete weighted Morrey space.

A discrete weight on $\Z$ is a sequence $\omiga = \{\omiga(k)\}_{k\in\Z}$ of positive real numbers.

\begin{definition}\label{d2.1}
Given $1\le p\le q<\infty$. For two discrete weights $\omiga$ and $v$, we define the {\it discrete weighted Morrey space} $l{^p_q}_{(\omiga,v)}=l{^p_q}_{(\omiga,v)}(\Z)$ to the space of all sequences $x=\{x(k)\}_{k\in\Z}\subset\R$ for which
\begin{equation*}
\|x\|_{l{^p_q}_{(\omiga,v)}}:=\sup_{m\in \Z,N\in\N}(v(S_{m,N}))^{\frac{1}{q}-\frac{1}{p}}\bigg(\sum_{k\in S_{m,N}}\left|x(k)\right|^p\omiga(k)\bigg)^\frac{1}{p}<\infty,
\end{equation*}
where $v(S_{m,N}):=\sum\limits_{k\in S_{m,N}}v(k)$. Particularly, when $p=q$,
\begin{equation*}
\|x\|_{l{^p_q}_{(\omiga,v)}}=\|x\|_{l^p_\omiga}:=\left(\sum\limits_{k\in\Z}|x(k)|^p\omiga(k)\right)^\frac{1}{p}.
\end{equation*}
\end{definition}

\begin{remark}\label{2.2}
When $\omiga= v\equiv 1$, ${l^p_q}_{(\omiga,v)}(\Z)$ coincides with discrete Morrey space $l^p_q(\Z)$. In addition, since $\|x\|_{l{^p_q}}:=\sup\limits_{m\in \Z,N\in\N}\left| S_{m,N}\right|^{\frac{1}{q}-\frac{1}{p}}\bigg(\sum\limits_{k\in S_{m,N}}\left|x(k)\right|^p\bigg)^\frac{1}{p}$ is a norm on $l{^p_q}$ (see \cite[Proposition 2.2]{lsmyxz18}). Similarly, we can obtain $\|x\|_{l{^p_q}_{(\omiga,v)}}$ is also a norm on $l{^p_q}_{(\omiga,v)}$. Furthermore, $l{^p_q}_{(\omiga,v)}$ is a Banach space with respect to the norm $\|x\|_{l{^p_q}_{(\omiga,v)}}$.
\end{remark}

\begin{remark}\label{2.3}
\begin{itemize}
\item[\rm(i)] When discrete weight $v=\omiga$, $l{^p_q}_{(\omiga,v)}$ reduces to $l{^p_q}_{(\omiga)}$:
\begin{equation*}
\|x\|_{l{^p_q}_{(\omiga)}}:=\sup_{m\in \Z,N\in\N}(\omiga(S_{m,N}))^{\frac{1}{q}-\frac{1}{p}}\bigg(\sum_{k\in S_{m,N}}\left|x(k)\right|^p\omiga(k)\bigg)^\frac{1}{p}.
\end{equation*}
\item[\rm(ii)] When $1\le p<q=\infty$, $l{^p_q}_{(\omiga,v)}$ reduces to $l{^p_\infty}_{(\omiga,v)}$:
\begin{equation*}
\|x\|_{l{^p_\infty}_{(\omiga,v)}}:=\sup_{m\in \Z,N\in\N}(v(S_{m,N}))^{-\frac{1}{p}}\bigg(\sum_{k\in S_{m,N}}\left|x(k)\right|^p\omiga(k)\bigg)^\frac{1}{p}.
\end{equation*}
\end{itemize}
\end{remark}

Proposition \ref{m2.4} will be useful in studying the relations between two discrete weighted Morrey spaces.

\begin{proposition}\label{m2.4}
Let $\omiga$ and $v$ be discrete weights.
\begin{itemize}
\item[\rm(i)] If $1\le p\le q<\infty$ and $\omiga(k)\le Cv(k)$ for every $k\in \Z$, then $l{^p_q}_{(v,\omiga)}\subseteq l{^p_q}_{(\omiga,v)}$ with $\|x\|_{l{^p_q}_{(\omiga,v)}}\le C\|x\|_{l{^p_q}_{(v,\omiga)}}$ for every $x\in l{^p_q}_{(v,\omiga)}$.
\item[\rm(ii)] If $1\le p<q=\infty$ and $v=\omiga$, then $l^\infty\subseteq {l^p_\infty}_{(\omiga)}$ with $\|x\|_{{l^p_\infty}_{(\omiga)}}\le\|x\|_{l^\infty}$ for any $x\in l^\infty$.
\item[\rm(iii)] If $1<p<q<\infty$ and $v=\omiga$, then ${l^q_\omiga}\subseteq {l^p_q}_{(\omiga)}$ and $\|x\|_{{l^p_q}_{(\omiga)}}\le \|x\|_{{l^q_\omiga}}$ for any $x\in {l^q_\omiga}$.
\end{itemize}
\end{proposition}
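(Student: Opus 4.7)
The plan is to treat the three parts separately, in increasing order of difficulty; each amounts to a direct estimate of the defining quantity of $\|\cdot\|_{l^p_{q,(\omiga,v)}}$ on a generic cube $S_{m,N}$, followed by taking the supremum over $m\in\Z$ and $N\in\N$.

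For part (i), I would start from the pointwise hypothesis $\omiga(k)\le Cv(k)$ and sum it over $k\in S_{m,N}$ to obtain $\omiga(S_{m,N})\le Cv(S_{m,N})$, hence $v(S_{m,N})\ge C^{-1}\omiga(S_{m,N})$. Because $1\le p\le q<\infty$ forces $\frac{1}{q}-\frac{1}{p}\le 0$, this inequality reverses when raised to that power, giving
\[
v(S_{m,N})^{\frac{1}{q}-\frac{1}{p}}\le C^{\frac{1}{p}-\frac{1}{q}}\omiga(S_{m,N})^{\frac{1}{q}-\frac{1}{p}}.
\]
Combining this with the trivial estimate $\sum_{k\in S_{m,N}}|x(k)|^p\omiga(k)\le C\sum_{k\in S_{m,N}}|x(k)|^p v(k)$ and taking the supremum in $(m,N)$ yields the desired norm inequality with a new constant $C^{\frac{2}{p}-\frac{1}{q}}$ (which we still call $C$).

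For part (ii), I would simply use $|x(k)|\le \|x\|_{l^\infty}$ on each $k\in S_{m,N}$ to obtain
\[
\sum_{k\in S_{m,N}}|x(k)|^p\omiga(k)\le \|x\|_{l^\infty}^p\,\omiga(S_{m,N}),
\]
then take the $\tfrac{1}{p}$-th power and multiply by $\omiga(S_{m,N})^{-1/p}$. Taking the supremum over $(m,N)$ gives $\|x\|_{l^p_{\infty,(\omiga)}}\le \|x\|_{l^\infty}$.

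Part (iii) is the one requiring a genuine inequality: I plan to apply H\"older with conjugate exponents $\tfrac{q}{p}$ and $\tfrac{q}{q-p}$ to the factorization $|x(k)|^p\omiga(k)=\bigl(|x(k)|^p\omiga(k)^{p/q}\bigr)\cdot\omiga(k)^{1-p/q}$, obtaining
\[
\sum_{k\in S_{m,N}}|x(k)|^p\omiga(k)\le\bigg(\sum_{k\in S_{m,N}}|x(k)|^q\omiga(k)\bigg)^{p/q}\omiga(S_{m,N})^{1-p/q}.
\]
Taking the $\tfrac{1}{p}$-th power and multiplying by $\omiga(S_{m,N})^{\frac{1}{q}-\frac{1}{p}}$ makes the weight factor collapse, leaving $\bigl(\sum_{k\in S_{m,N}}|x(k)|^q\omiga(k)\bigr)^{1/q}\le\|x\|_{l^q_\omiga}$. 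Taking the supremum in $(m,N)$ completes the proof. The only subtle bookkeeping step, and the one I expect to be the main source of slip-ups, is correctly tracking the sign of $\tfrac{1}{q}-\tfrac{1}{p}$ in (i) and verifying that the H\"older exponents in (iii) cancel the weight powers exactly; everything else is mechanical.
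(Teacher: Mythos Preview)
Your proposal is correct and follows essentially the same approach as the paper: part (i) combines the two inequalities coming from $\omiga(k)\le Cv(k)$ (using the nonpositive exponent $\tfrac{1}{q}-\tfrac{1}{p}$ to reverse the weight-measure inequality), part (ii) is the direct $l^\infty$ bound, and part (iii) is H\"older's inequality---the paper in fact omits the details there, whereas you spell out the correct exponents and factorization.
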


\begin{proof}
$\rm(i)$ Since $\omiga(k)\le Cv(k)$ for every $k\in \Z$, we have $\left|x(k)\right|^p\omiga(k)\le C\left|x(k)\right|^pv(k)$ for every $k\in \Z$. Therefore, for any $m\in \Z$ and $N\in \N$, we obtain
\begin{equation}\label{eq2.1}
\bigg(\sum_{k\in S_{m,N}}\left|x(k)\right|^p\omiga(k)\bigg)^\frac{1}{p}\le C\bigg(\sum_{k\in S_{m,N}}\left|x(k)\right|^pv(k)\bigg)^\frac{1}{p}.
\end{equation}
In addition, by $\frac{1}{q}-\frac{1}{p}\le 0$, we have
\begin{equation}\label{eq2.2}
\bigg(\sum_{k\in S_{m,N}}v(k)\bigg)^{\frac{1}{q}-\frac{1}{p}}\le C\bigg(\sum_{k\in S_{m,N}}\omiga(k)\bigg)^{\frac{1}{q}-\frac{1}{p}}.
\end{equation}
According to (\ref{eq2.1}) and (\ref{eq2.2}), we obtain
\begin{align*}
&\bigg(\sum_{k\in S_{m,N}}v(k)\bigg)^{\frac{1}{q}-\frac{1}{p}}\bigg(\sum_{k\in S_{m,N}}\left|x(k)\right|^p\omiga(k)\bigg)^\frac{1}{p}\\
\le& C\bigg(\sum_{k\in S_{m,N}}\omiga(k)\bigg)^{\frac{1}{q}-\frac{1}{p}}\bigg(\sum_{k\in S_{m,N}}\left|x(k)\right|^pv(k)\bigg)^\frac{1}{p}.
\end{align*}
This proves $\rm(i)$.


$\rm(ii)$ Let $1\le p<q=\infty$. If $v=\omiga$ and $x\in l^\infty$, then for any $m\in\Z$ and $N\in\N$, we have
\begin{equation*}
\omiga(S_{m,N})^{-\frac{1}{p}}\left(\sum_{k\in S_{m,N}}\left|x(k)\right|^p\omiga(k)\right)^\pfzy\le \omiga(S_{m,N})^{-\frac{1}{p}}\|x\|_{l^\infty}\omiga(S_{m,N})^\pfzy=\|x\|_{l^\infty},
\end{equation*}
which implies that $\|x\|_{{l^p_\infty}_{(\omiga)}}\le\|x\|_{l^\infty}$ and hence $l^\infty\subseteq {l^p_\infty}_{(\omiga)}$.

Since the proof of $\rm(iii)$ is derived immediately from H\"older's inequality, we omit it.
We finish the proof of Proposition \ref{m2.4}.
\end{proof}

\begin{definition}
For $1\le p\le q<\infty$, we defined the {\it weighted weak type discrete Morrey space} ${wl^p_q}_{(\omiga)}$ to be the set of sequences $x=\{x(k)\}_{k\in\Z}$ taking values in $\R$ such that $\|x\|_{{wl^p_q}_{(\omiga)}}<\infty$, where $\|\cdot\|_{{wl^p_q}_{(\omiga)}}$ is given by
\begin{equation*}
\|x\|_{{wl^p_q}_{(\omiga)}}:=\sup_{m\in\Z,N\in\N,\lambda>0}\omiga(S_{m,N})^{\frac{1}{q}-\frac{1}{p}}\lambda\omiga(\{k\in S_{m,N}: |x(k)|>\lambda\})^\frac{1}{p}.
\end{equation*}
Note that when $\omiga=1$, ${wl^p_q}_{(\omiga)}=:wl^p_q$, which is a weak type discrete Morrey space (see \cite[Page 4]{lsmyxz18}).
\end{definition}

\begin{proposition}\label{p2.2}
Let $1\le p\le q<\infty$ and $\omiga$ be a discrete weight on $\Z$. Then ${l^p_q}_{(\omiga)}\subseteq {wl^p_q}_{(\omiga)}$ with $\|x\|_{{wl^p_q}_{(\omiga)}}\le\|x\|_{{l^p_q}_{(\omiga)}}$ for every $x\in {l^p_q}_{(\omiga)}$.
\end{proposition}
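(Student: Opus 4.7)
The plan is to reduce the claim to a weighted Chebyshev-type inequality applied inside each cube $S_{m,N}$, which is the standard route from a strong-type Morrey norm to its weak analogue.

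First I would fix arbitrary $m \in \Z$, $N \in \N$ and $\lambda > 0$, and set $E_\lambda := \{k \in S_{m,N}: |x(k)| > \lambda\}$. On $E_\lambda$ one has $\lambda^p < |x(k)|^p$, so multiplying by the positive weight $\omega(k)$ and summing yields
\begin{equation*}
\lambda^p\, \omega(E_\lambda) = \sum_{k \in E_\lambda} \lambda^p \omega(k) \le \sum_{k \in E_\lambda} |x(k)|^p \omega(k) \le \sum_{k \in S_{m,N}} |x(k)|^p \omega(k).
\end{equation*}
This is the discrete weighted Chebyshev inequality localized to $S_{m,N}$; no deep input is needed beyond positivity of $\omega$.

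Next I would take $p$-th roots and multiply both sides by the nonnegative factor $\omega(S_{m,N})^{1/q - 1/p}$ (which is well defined since $\omega(S_{m,N}) > 0$). This gives
\begin{equation*}
\omega(S_{m,N})^{\frac{1}{q}-\frac{1}{p}} \lambda\, \omega(E_\lambda)^{\frac{1}{p}} \le \omega(S_{m,N})^{\frac{1}{q}-\frac{1}{p}} \bigg(\sum_{k \in S_{m,N}} |x(k)|^p \omega(k)\bigg)^{\frac{1}{p}} \le \|x\|_{l^p_{q,(\omega)}},
\end{equation*}
where the last inequality is the definition of the discrete weighted Morrey norm with $v = \omega$.

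Finally, taking the supremum over $m \in \Z$, $N \in \N$, and $\lambda > 0$ on the left-hand side yields $\|x\|_{wl^p_{q,(\omega)}} \le \|x\|_{l^p_{q,(\omega)}}$, which both establishes the inclusion and the norm inequality. There is no real obstacle here; the only thing to watch is the sign of the exponent $\frac{1}{q} - \frac{1}{p}$, but since the same factor appears on both sides of the inequality, monotonicity arguments are not required, so the argument goes through uniformly for all $1 \le p \le q < \infty$.
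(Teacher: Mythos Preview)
Your proof is correct and follows essentially the same route as the paper: a localized weighted Chebyshev inequality on each $S_{m,N}$, followed by multiplication by $\omega(S_{m,N})^{1/q-1/p}$ and a supremum over $m,N,\lambda$. The paper presents the same chain of inequalities in slightly more condensed form, without explicitly naming $E_\lambda$ or the Chebyshev step.
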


\begin{proof}
For $x\in {l^p_q}_{(\omiga)}$, $m\in\Z$, $N\in\N$ and $\lambda>0$, we have
\begin{small}
\begin{equation*}
\begin{aligned}
&\omiga(S_{m,N})^{\frac{1}{q}-\frac{1}{p}}\lambda\omiga(\{k\in S_{m,N}: |x(k)|>\lambda\})^\frac{1}{p}\\
\le &\omiga(S_{m,N})^{\frac{1}{q}-\frac{1}{p}}\left(\sum\limits_{k\in S_{m,N}, |x(k)|>\lambda}|x(k)|^p\omiga(k)\right)^\frac{1}{p}
\le \omiga(S_{m,N})^{\frac{1}{q}-\frac{1}{p}}\left(\sum\limits_{k\in S_{m,N}}|x(k)|^p\omiga(k)\right)^\frac{1}{p}.
\end{aligned}
\end{equation*}
\end{small}\\
Taking the supremum over $m\in\Z, N\in\N$ and $\lambda>0$, we finish the proof of Proposition \ref{p2.2}.
\end{proof}

In the following, referring to the definition of discrete Muckenhoupt class restricted to $\Z_+$ (see \cite[Page 2]{lsq2}), we give the definition of discrete Muckenhoupt class on $\Z$.

By interval $J$, we mean a finite subset of $\Z$ consisting of consecutive integers, i.e., $J=\{a,a+1,\dots,a+n\}$, $a, n\in \Z$, and $\left|J\right|$ stands for its cardinality.

\begin{definition}\label{d2.6}
A discrete weight $\omiga$ is said to belong to the {\it discrete Muckenhoupt class} $\huaA_1=\huaA_1(\Z)$ if
\begin{equation*}
\|\omiga\|_{\huaA_1(\Z)}:=\sup\limits_{J\subset \Z}\frac{1}{\jsJ}\left(\frac{1}{\inf\limits_{k\in J}\omiga(k)}\sum\limits_{k\in J}\omiga(k)\right)<\infty.
\end{equation*}
For $1<p<\infty$, a discrete weight $\omiga$ is said to belong to the {\it discrete Muckenhoupt class} $\huaA_p=\huaA_p(\Z)$ if
\begin{equation*}
\|\omiga\|_{\huaA_p(\Z)}:=\sup\limits_{J\subset\Z}\left(\frac{1}{\left|J\right|}\sum_J\omiga\right)\left(\frac{1}
{\left|J\right|}\sum_J\omiga^{\frac{-1}{p-1}}\right)
^{p-1}<\infty,
\end{equation*}
where $\|\omiga\|_{\huaA_p(\Z)}$ denotes the norm of weight $\omiga$ and $J$ is any bounded interval in $\Z$. Define $\huaA_\infty:=\mathop{\cup}\limits_{1\le p<\infty}\huaA_p$.
\end{definition}

Some basic properties of discrete $\huaA_p$ weights are given as follows: Their proofs are the same as corresponding results on the real line \cite{shu}. For the sake of the completeness, some of the proofs are included.

\begin{proposition}\label{p1.1}
Let $\omiga$ be a discrete weight on $\Z$. Then the following statements are equivalent:
\begin{itemize}
\item[\rm(i)] $\omiga\in\huaA_1$;
\item[\rm(ii)] $\left(\frac{1}{\jsJ}\sum\limits_{k\in J}\omiga(k)\right)\|\omiga^{-1}\|_{l^\infty(J)}\le C$, for every bounded interval $J\subset \Z$. Here, $\|\omiga\|_{l^\infty(J)}:=\max\limits_{k\in J}\left|\omiga(k)\right|$.
\end{itemize}
\end{proposition}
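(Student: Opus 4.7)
The plan is to observe that condition (ii) is essentially a notational rewriting of the $\mathcal{A}_1$ condition, and then to carefully verify the identity that makes the two formulations equal. Since $\omega$ takes positive values on the finite set $J$, the infimum appearing in $\|\omega\|_{\huaA_1}$ is actually attained, so $\inf_{k\in J}\omega(k) = \min_{k\in J}\omega(k) > 0$.

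First I would unwind the $l^\infty(J)$ norm of $\omega^{-1}$: since $\omega^{-1}(k) = 1/\omega(k) > 0$ for $k \in J$, we have
\begin{equation*}
\|\omega^{-1}\|_{l^\infty(J)} = \max_{k\in J}\left|\frac{1}{\omega(k)}\right| = \frac{1}{\min_{k\in J}\omega(k)} = \frac{1}{\inf_{k\in J}\omega(k)}.
\end{equation*}
Multiplying both sides by the average $\frac{1}{|J|}\sum_{k\in J}\omega(k)$, I get the identity
\begin{equation*}
\frac{1}{|J|}\left(\frac{1}{\inf_{k\in J}\omega(k)}\sum_{k\in J}\omega(k)\right) = \left(\frac{1}{|J|}\sum_{k\in J}\omega(k)\right)\|\omega^{-1}\|_{l^\infty(J)}
\end{equation*}
for every bounded interval $J\subset \Z$.

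Then I would take the supremum over all bounded intervals $J\subset \Z$ on both sides. The left-hand side is exactly $\|\omega\|_{\huaA_1}$, and the right-hand side is bounded by a constant $C$ precisely when (ii) holds. Hence (i) is equivalent to (ii), with the smallest admissible constant $C$ in (ii) equal to $\|\omega\|_{\huaA_1}$.

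There is no real obstacle here; the only subtlety worth noting is that $J$ is finite, so the infimum is attained and the reciprocal operation is unambiguous. The whole argument is a one-line computation once the identity above is written down, and I would present it in a short paragraph rather than splitting it into (i)$\Rightarrow$(ii) and (ii)$\Rightarrow$(i) directions.
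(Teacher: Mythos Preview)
Your proposal is correct: the key identity $\|\omega^{-1}\|_{l^\infty(J)}=1/\inf_{k\in J}\omega(k)$ (valid because $J$ is finite and $\omega>0$) makes conditions (i) and (ii) literally the same quantity, so there is nothing more to do. The paper does not write out a proof of this proposition at all, merely remarking that it is the same as in the continuous setting, so your short direct verification is entirely in line with what the authors intend.
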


\begin{proposition}\label{m2.8}
Let $\omiga$ be a discrete weight on $\Z$. Then the following statements are equivalent:
\begin{itemize}
\item[\rm(i)] $\omiga\in\huaA_p$ $(1<p<\infty)$;
\item[\rm(ii)] $\frac{1}{\left|J\right|}\sum\limits_{k\in J}\left|x(k)\right|\le C\left(\frac{1}{\omiga(J)}\sum\limits_{k\in J}\left|x(k)\right|^p\omiga(k)\right)^{\frac{1}{p}}$, $x\in l{^p_{\omiga}}(\Z)$.
\end{itemize}
\end{proposition}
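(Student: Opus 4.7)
The plan is to prove the equivalence by direct computation, handling the two directions via H\"older's inequality and a suitable extremal test sequence respectively.

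For (i)$\Rightarrow$(ii), I would write $|x(k)| = \bigl(|x(k)|\omiga(k)^{1/p}\bigr)\cdot\omiga(k)^{-1/p}$ and apply H\"older's inequality with exponents $p$ and $p'$ to the sum over $J$. This produces
\begin{equation*}
\frac{1}{|J|}\sum_{k\in J}|x(k)|
\le \frac{1}{|J|}\left(\sum_{k\in J}|x(k)|^p\omiga(k)\right)^{1/p}\left(\sum_{k\in J}\omiga(k)^{-p'/p}\right)^{1/p'}.
\end{equation*}
Using the identity $p'/p=1/(p-1)$ and rearranging so that $\omiga(J)^{1/p}$ appears in the first factor, the inequality in (ii) reduces to the bound
\begin{equation*}
\frac{\omiga(J)}{|J|^p}\left(\sum_{k\in J}\omiga(k)^{-1/(p-1)}\right)^{p-1}\le \|\omiga\|_{\huaA_p(\Z)},
\end{equation*}
which is exactly Definition \ref{d2.6} after dividing inside the $(p-1)$-th power by $|J|^{p-1}$. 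Hence (ii) holds with constant $C=\|\omiga\|_{\huaA_p(\Z)}^{1/p}$.

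For (ii)$\Rightarrow$(i), I would test the inequality on the extremal sequence $x(k):=\omiga(k)^{-1/(p-1)}\kaf_{J}(k)$ for an arbitrary bounded interval $J\subset\Z$. Because $J$ is finite and $\omiga$ is strictly positive, this sequence lies in $l^p_\omiga(\Z)$, so the hypothesis applies. The key observation is the collapsing identity $|x(k)|^p\omiga(k)=\omiga(k)^{-1/(p-1)}\kaf_J(k)=|x(k)|\kaf_J(k)$, so both $\sum_{k\in J}|x(k)|$ and $\sum_{k\in J}|x(k)|^p\omiga(k)$ equal $\sum_{k\in J}\omiga(k)^{-1/(p-1)}$. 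Substituting into (ii), raising to the $p$-th power, and rearranging gives precisely
\begin{equation*}
\left(\frac{1}{|J|}\sum_{k\in J}\omiga(k)\right)\left(\frac{1}{|J|}\sum_{k\in J}\omiga(k)^{-1/(p-1)}\right)^{p-1}\le C^p,
\end{equation*}
and taking the supremum over all $J$ gives $\omiga\in\huaA_p$ with $\|\omiga\|_{\huaA_p(\Z)}\le C^p$.

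Neither direction presents a real obstacle: the proof is essentially a bookkeeping exercise in H\"older's inequality plus a standard self-dual test function. The only point requiring mild attention is verifying that $x=\omiga^{-1/(p-1)}\kaf_J$ belongs to $l^p_\omiga(\Z)$ so that the hypothesis (ii) may legitimately be invoked, which is immediate from the finiteness of $J$.
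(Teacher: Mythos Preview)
Your proposal is correct and follows essentially the same approach as the paper: H\"older's inequality for (i)$\Rightarrow$(ii), and the test sequence $x=\omiga^{-1/(p-1)}\kaf_J$ for (ii)$\Rightarrow$(i). Your treatment is in fact slightly more careful, since you explicitly verify that the test sequence lies in $l^p_\omiga(\Z)$ and you track the constants, whereas the paper leaves these implicit.
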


\begin{proof}
First, we prove that $\rm(i)\Rightarrow\rm(ii)$. By H\"older's inequality and the definition of $\huaA_p$ $(1<p<\infty)$, we have
\begin{align*}
\frac{1}{\left|J\right|}\sum\limits_{k\in J}\left|x(k)\right|&= \frac{1}{\left|J\right|}\sum\limits_{k\in J}\left|x(k)\right|{\omiga(k)}^{\frac{1}{p}}{\omiga(k)}^{-\frac{1}{p}}\\
&\le \frac{1}{\left|J\right|}\left(\sum\limits_{k\in J}{\left|x(k)\right|}^p{\omiga(k)}\right)^{\frac{1}{p}}\left(\sum\limits_{k\in J}\omiga(k)^{-\frac{1}{p-1}}\right)^{\frac{p-1}{p}}\\
&\le C\left(\sum\limits_{k\in J}{\left|x(k)\right|}^p{\omiga(k)}\right)^{\frac{1}{p}}\left(\sum\limits_{k\in J}\omiga(k)\right)^{-\frac{1}{p}}\\
&= C\left(\frac{1}{\omiga(J)}\sum\limits_{k\in J}{\left|x(k)\right|}^p{\omiga(k)}\right)^{\frac{1}{p}}.
\end{align*}

Next, let us show $\rm(ii)\Rightarrow\rm(i)$. Let $x(k)={\omiga(k)}^{-\frac{1}{p-1}}$. According to $\rm(ii)$, we obtain
\begin{equation*}\begin{array}{rcl}
\frac{1}{\left|J\right|}\sum\limits_{k\in J}{\omiga(k)}^{-\frac{1}{p-1}}&\le& C\left(\frac{1}{\omiga(J)}\sum\limits_{k\in J}{\omiga(k)}^{-\frac{p}{p-1}}\omiga(k)\right)^\pfzy
\end{array}\end{equation*}
\begin{equation*}
\Longrightarrow\quad\frac{1}{\left|J\right|}\bigg(\sum\limits_{k\in J}\omiga(k)\bigg)^{\frac{1}{p}}\bigg(\sum\limits_{k\in J}{\omiga(k)}^{-\frac{1}{p-1}}\bigg)^\frac{p-1}{p}\le C
\end{equation*}
\begin{align*}
& \Longrightarrow\quad\bigg(\frac{1}{\left|J\right|}\sum_{k\in J}\omiga(k)\bigg)\bigg(\frac{1}{\left|J\right|}\sum_{k\in J}\omiga(k)^{\frac{-1}{p-1}}\bigg)^{p-1}\le C\\
& \Longrightarrow\quad \omiga\in\huaA_p.
\end{align*}
We finish the proof of Proposition \ref{m2.8}.
\end{proof}

From Propositions \ref{p1.1} and \ref{m2.8}, we deduce the following proposition.

\begin{proposition}\label{m2.9}
Let $\omiga\in\huaA_p$ $(1\le p<\infty)$, $\lambda\in\Z_+$, $m\in\Z$, $N\in\N$ and $\lambda S_{m,N}:=\{k\in\Z:\left|k-m\right|\le \lambda N\}$. For each $N\in\N$, consider the collection of disjoint intervals of cardinality $2^N$,
\begin{equation*}
I_N:=\{I_{N,j}\}:=\{\{(j-1)2^N+1,\cdots,j2^N\}\}_{j\in\Z}.
\end{equation*}
For any interval $I\in I_N$ and $n\ge 2$ we define
\begin{equation*}
\begin{aligned}
&nLI:=\{(j-n)2^N+1,\cdots,j2^N\},\quad \left|nLI\right|=n2^N;\\
&nRI:=\{(j-1)2^N+1,\cdots,(j+n-1)2^N\},\quad \left|nRI\right|=n2^N;\\
&(2n-1)I:=\{(j-n)2^N+1,\cdots,(j+n-1)2^N\},\quad \left|(2n-1)I\right|=(2n-1)2^N.
\end{aligned}
\end{equation*}
Then the following holds:
\begin{itemize}
\item[\rm(i)]
There exists a constant $C>0$ such that
\begin{equation*}
\sum_{k\in\lambda S_{m,N}}\omiga(k)\le \left(\frac{3}{2}C\right)^p\lambda^p\sum_{k\in S_{m,N}}\omiga(k).
\end{equation*}
\item[\rm(ii)]
There exists a constant $C>0$ such that
\begin{equation*}
\begin{aligned}
\sum_{k\in nLI}\omiga(k)&\le Cn^p\sum_{k\in I}\omiga(k),\\
\sum_{k\in nRI}\omiga(k)&\le Cn^p\sum_{k\in I}\omiga(k),\\
\sum_{k\in (2n-1)I}\omiga(k)&\le C(2n-1)^p\sum_{k\in I}\omiga(k).
\end{aligned}
\end{equation*}
\end{itemize}
\end{proposition}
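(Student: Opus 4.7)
The plan is to reduce both parts to a single doubling-type estimate for $\huaA_p$ weights: for any pair of intervals $E\subset J$ in $\Z$,
\begin{equation*}
\omiga(J)\le C\left(\frac{|J|}{|E|}\right)^p\omiga(E),
\end{equation*}
with $C$ depending only on $\|\omiga\|_{\huaA_p}$. When $1<p<\infty$, this follows by invoking Proposition \ref{m2.8}(ii) with $x=\kaf_E$ on the interval $J$: the left-hand side of that inequality reduces to $|E|/|J|$, the right-hand side to $C(\omiga(E)/\omiga(J))^{1/p}$, and rearranging yields the displayed bound. When $p=1$, Proposition \ref{p1.1} gives $\omiga(J)\le C|J|\inf_{k\in J}\omiga(k)\le C|J|\inf_{k\in E}\omiga(k)\le C(|J|/|E|)\omiga(E)$, which is the displayed estimate with exponent $1$.

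For (i) I apply the doubling estimate to $E=S_{m,N}$ and $J=\lambda S_{m,N}$, both intervals in $\Z$ with $E\subset J$. A direct computation gives
\begin{equation*}
\frac{|\lambda S_{m,N}|}{|S_{m,N}|}=\frac{2\lambda N+1}{2N+1}\le\frac{3}{2}\lambda
\end{equation*}
for every $\lambda\in\Z_+$ and $N\in\N$ (one checks $N=0$ and $N\ge 1$ separately), so substitution produces $\omiga(\lambda S_{m,N})\le(3C/2)^p\lambda^p\omiga(S_{m,N})$, as required.

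For (ii) I apply the doubling estimate three times, taking $E=I\in I_N$ and $J$ equal to $nLI$, $nRI$, and $(2n-1)I$ in turn; each of these sets is an interval in $\Z$ containing $I$, with cardinalities $n\cdot 2^N$, $n\cdot 2^N$, and $(2n-1)2^N$ respectively, while $|I|=2^N$. The three ratios $|J|/|I|$ therefore equal $n$, $n$, and $2n-1$, and the three stated inequalities follow at once.

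The only real content is the derivation of the doubling estimate, and the main subtlety is that it must be handled separately in the $p=1$ case (via the pointwise characterization in Proposition \ref{p1.1}) and the $p>1$ case (via Proposition \ref{m2.8}); after that, everything reduces to a straightforward cardinality count in the enlarged intervals.
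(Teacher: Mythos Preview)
Your proof is correct and follows essentially the same approach as the paper: the paper also splits into the cases $p=1$ (using Definition~\ref{d2.6}/Proposition~\ref{p1.1}) and $p>1$ (applying Proposition~\ref{m2.8} with $x=\kaf_{S_{m,N}}$), obtains the same cardinality ratio bound $\frac{2\lambda N+1}{2N+1}\le\frac{3}{2}\lambda$, and then declares part~(ii) analogous. The only cosmetic difference is that you isolate the general doubling inequality $\omiga(J)\le C(|J|/|E|)^p\omiga(E)$ up front and then specialize, whereas the paper carries out the computation directly for $E=S_{m,N}$, $J=\lambda S_{m,N}$; the underlying argument is identical.
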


\begin{proof}
$\rm(i)$ When $p=1$, according to Definition \ref{d2.6}, we obtain
\begin{equation*}
\frac{1}{\jslJ}\sum_{k\in\lSmn}\omiga(k)\le C\inf_{k\in\lSmn}\omiga(k)\le C\inf_{k\in\Smn}\omiga(k)
\end{equation*}
\begin{align*}
\Longrightarrow\quad\sum\limits_{k\in\lSmn}\omiga(k)&\le C\jslJ\inf\limits_{k\in\Smn}\omiga(k)\\
&=C\frac{\jslJ}{\jssmn}\jssmn\inf\limits_{k\in\Smn}\omiga(k)\\
&\le C\frac{\jslJ}{\jssmn}\sum\limits_{k\in\Smn}\omiga(k)\\
&= C\frac{2\lambda N+1}{2N+1}\sum\limits_{k\in\Smn}\omiga(k)\\
&\le \frac{3}{2}C\lambda\sum\limits_{k\in\Smn}\omiga(k).
\end{align*}

When $p>1$, let $x(k):=\mathcal{X}_{S_{m,N}}(k)$. By Proposition \ref{m2.8}, one has
\begin{small}
\begin{equation*}
\begin{aligned}
&\frac{1}{\jslJ}\sum\limits_{k\in \lSmn}\kaf_{\Smn}(k)\le C\left(\frac{1}{\omiga(\lambda S_{m,N})}\sum\limits_{k\in\lSmn}\left|\kaf_{\Smn}(k)\right|^p\omiga(k)\right)^{\frac{1}{p}}\\
\Longrightarrow &\frac{\jssmn}{\jslJ}\left(\sum\limits_{k\in\lSmn}\omiga(k)\right)^{\pfzy}\le
C\left(\sum\limits_{k\in\Smn}\omiga(k)\right)^{\pfzy}\\
\Longrightarrow &\left(\sum\limits_{k\in\lSmn}\omiga(k)\right)^{\pfzy}\le C\frac{{2{\lambda}N+1}}{2N+1}\left(\sum\limits_{k\in\Smn}\omiga(k)\right)^{\pfzy}\\
\Longrightarrow &\left(\sum\limits_{k\in\lSmn}\omiga(k)\right)^{\pfzy}\le C\frac{3}{2}\lambda\left(\sum\limits_{k\in\Smn}\omiga(k)\right)^{\pfzy}\\
\Longrightarrow &\sum_{k\in\lSmn}\omiga(k)\le\left(\frac{3}{2}C\right)^p\lambda^p\sum_{k\in\Smn}\omiga(k).
\end{aligned}
\end{equation*}
\end{small}

The proof of $\rm(ii)$ is similar to that of $\rm(i)$, so the details are omitted. This finishes the proof of Proposition \ref{m2.9}.
\end{proof}

\begin{remark}\label{r2.10}
If $\omiga\equiv C$, then $C\in \huaA_p$ $(p\ge 1)$. Particularly, if $\omiga\equiv 1$, then by Proposition \ref{m2.9}, we have $\jslJ\le C\lambda\jssmn$, i.e., the counting measure is a doubling measure.
\end{remark}

Similar to the definitions of discrete $\huaA_p(\Z)$ ($1\le p< \infty$) weights, replacing $\Z$ by $\N$, we can also give the definitions of discrete $\huaA_p(\N)$ weights. The following Proposition \ref{m2.15} discussed the relationship of $\huaA_p(\Z)$ and $\huaA_p(\N)$.

\begin{proposition}\label{m2.15}
If $\omiga\in\huaA_p(\N)$ $(1\le p<\infty)$, then $\widetilde{\omiga}(\cdot):=\omiga(\left|\cdot\right|)\in\huaA_p(\Z)$ $(1\le p<\infty)$ with
$$
\|\widetilde{\omiga}\|_{\huaA_p(\Z)}\le
\begin{cases}
4\|\omiga\|_{\huaA_1(\N)}, \quad p = 1,\\
4C_{2,p}\|\omiga\|_{\huaA_p(\N)}, \quad p>1.
\end{cases}
$$
\end{proposition}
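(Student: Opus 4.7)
The plan is to fix an arbitrary bounded interval $J\subset\Z$ and bound the $\huaA_p(\Z)$-ratio of $\widetilde{\omiga}$ on $J$ by the $\huaA_p(\N)$-ratio of $\omiga$ on a closely related interval in $\N$, distinguishing cases according to whether $J$ contains both negative and non-negative integers.

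In the non-straddling case, one of two things happens. If $J\subset\N$, then $\widetilde{\omiga}\equiv\omiga$ on $J$, so the $\huaA_p(\Z)$-ratio over $J$ is directly bounded by $\|\omiga\|_{\huaA_p(\N)}$. If $J\subset\{-1,-2,\ldots\}$, the reflection $k\mapsto -k$ maps $J$ bijectively onto a consecutive block $J_*\subset\{1,2,\ldots\}\subset\N$ with $|J_*|=|J|$ and $\widetilde{\omiga}(k)=\omiga(-k)$, so the $\huaA_p(\Z)$-ratio of $\widetilde{\omiga}$ on $J$ coincides with the $\huaA_p(\N)$-ratio of $\omiga$ on $J_*$ and is again bounded by $\|\omiga\|_{\huaA_p(\N)}$.

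In the straddling case, write $J=\{-m,-m+1,\ldots,n\}$ with $m\ge 1$, $n\ge 0$, put $M:=\max(m,n)$, and set $K:=\{0,1,\ldots,M\}\subset\N$. The reflection identity $\sum_{k\in J}\widetilde{\omiga}(k)=\sum_{j=1}^{m}\omiga(j)+\sum_{k=0}^{n}\omiga(k)$, together with the inclusions $\{1,\ldots,m\}\subset K$ and $\{0,\ldots,n\}\subset K$, yields
\[
\sum_{k\in J}\widetilde{\omiga}(k)\le 2\sum_{k\in K}\omiga(k),\qquad \sum_{k\in J}\widetilde{\omiga}(k)^{-1/(p-1)}\le 2\sum_{k\in K}\omiga(k)^{-1/(p-1)},
\]
while $|K|=M+1\le|J|=m+n+1\le 2|K|$ and $\inf_J\widetilde{\omiga}=\inf_K\omiga$. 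For $p=1$, combining the first inequality with the $\huaA_1(\N)$-estimate on $K$ gives $|J|^{-1}\sum_J\widetilde{\omiga}\le 2\|\omiga\|_{\huaA_1(\N)}\inf_J\widetilde{\omiga}$, hence $\|\widetilde{\omiga}\|_{\huaA_1(\Z)}\le 2\|\omiga\|_{\huaA_1(\N)}\le 4\|\omiga\|_{\huaA_1(\N)}$. For $p>1$, multiplying the two doubled bounds, pulling out the factor $(|K|/|J|)^p\le 1$, and applying the $\huaA_p(\N)$-hypothesis on $K$ yields the clean bound $2^p\|\omiga\|_{\huaA_p(\N)}$, which is absorbed into $4C_{2,p}\|\omiga\|_{\huaA_p(\N)}$ by taking $C_{2,p}:=2^{p-2}$ (or by interpreting $C_{2,p}$ as a $p$-dependent doubling constant of the sort appearing in Proposition \ref{m2.9} with $\lambda=2$).

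The only real obstacle is the careful bookkeeping in the straddling case: identifying the correct envelope $K\subset\N$, verifying the factor-of-two overcount in both the sum of $\widetilde{\omiga}$ and the sum of $\widetilde{\omiga}^{-1/(p-1)}$, and checking the size relation $|K|\le|J|\le 2|K|$ along with $\inf_K\omiga=\inf_J\widetilde{\omiga}$. Once these are laid down, both the $p=1$ and $p>1$ claims follow immediately from the $\huaA_p(\N)$-hypothesis applied to $K$.
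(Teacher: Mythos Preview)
Your argument is correct and more streamlined than the paper's. In the straddling case the paper splits $J=J_1\cup J_2$ with $J_1\subset\Z_-$ and $J_2\subset\N$, expands the $\huaA_p$-product of $\widetilde{\omiga}$ into four cross-terms via the inequality $(a+b)^{p-1}\le C_{2,p}(a^{p-1}+b^{p-1})$ (this is precisely where the constant $C_{2,p}$ in the statement originates), bounds the two ``diagonal'' terms directly by $\|\omiga\|_{\huaA_p(\N)}$, and then disposes of the two mixed terms by a further case split on whether $-J_1\subset J_2$ or $J_2\subset -J_1\cup\{0\}$; the $p=1$ case follows the same pattern without the $C_{2,p}$ factor. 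Your envelope-interval device $K=\{0,1,\ldots,\max(m,n)\}$ bypasses both the cross-term expansion and the secondary case split: the single comparison $\sum_J\widetilde{\omiga}\le 2\sum_K\omiga$, its dual-sum analogue, and the size relation $|K|\le|J|$ reduce everything to one application of the $\huaA_p(\N)$-hypothesis on $K$. This actually yields the sharper constants $2$ (for $p=1$) and $2^p$ (for $p>1$). One small point of bookkeeping: in the statement $C_{2,p}$ is the paper's fixed constant from $(a+b)^{p-1}\le C_{2,p}(a^{p-1}+b^{p-1})$, so $C_{2,p}=\max(1,2^{p-2})$ and hence $4C_{2,p}=\max(4,2^p)\ge 2^p$; your bound is therefore consistent with the stated inequality, but you should not \emph{define} $C_{2,p}$ yourself---simply observe that your $2^p$ is dominated by the paper's $4C_{2,p}$.
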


\begin{proof}
For every bounded interval $J\subset\Z$, if $J\subset\N$ or $J\subset\Z_-\cup\{0\}$, then the result is obvious. Now we only need to discuss the case where the interval $J$ contains both negative integers, zero and positive integers.

Let $J=J_1\cup J_2$ with $J_1\subset\Z_-$ and $J_2\subset\N$. When $p=1$, we have
\begin{equation}\label{eq2.11}
\begin{aligned}
\max\limits_{k\in J}\frac{1}{\omiga(\kdjdz)}&=\max\limits_{k\in -J_1\cup J_2}\frac{1}{\omiga(k)}\\
&\le\max\left\{\left(\max\limits_{k\in -J_1}\frac{1}{\omiga(k)}\right),\left(\max\limits_{k\in J_2}\frac{1}{\omiga(k)}\right)\right\}\\
&\le\max\limits_{k\in -J_1}\frac{1}{\omiga(k)}+\max\limits_{k\in J_2}\frac{1}{\omiga(k)}.
\end{aligned}
\end{equation}
From Proposition \ref{p1.1} and (\ref{eq2.11}), it follows that
\begin{small}
\begin{equation}\label{eq2.6}
\begin{aligned}
\frac{1}{\jsJ}\sum\limits_{k\in J}\omiga(\kdjdz)\|\omiga^{-1}\|_{l^\infty}
=&\frac{1}{\jsJ}\left(\sum\limits_{k\in -J_1}\omiga(k)+\sum\limits_{k\in J_2}\omiga(k)\right)\max\limits_{k\in J}\frac{1}{\omiga(\kdjdz)}\\
\le &\left(\frac{1}{\jsJ}\sum\limits_{k\in -J_1}\omiga(k)+\frac{1}{\jsJ}\sum\limits_{k\in J_2}\omiga(k)\right)\left(\max\limits_{k\in -J_1}\frac{1}{\omiga(k)}+\max\limits_{k\in J_2}\frac{1}{\omiga(k)}\right)\\
=& \left(\frac{1}{\jsJ}\sum\limits_{k\in -J_1}\omiga(k)\right)\max\limits_{k\in -J_1}\frac{1}{\omiga(k)}+\left(\frac{1}{\jsJ}\sum\limits_{k\in -J_1}\omiga(k)\right)\max\limits_{k\in J_2}\frac{1}{\omiga(k)}\\
& +\left(\frac{1}{\jsJ}\sum\limits_{k\in J_2}\omiga(k)\right)\max\limits_{k\in -J_1}\frac{1}{\omiga(k)}+\left(\frac{1}{\jsJ}\sum\limits_{k\in J_2}\omiga(k)\right)\max\limits_{k\in J_2}\frac{1}{\omiga(k)}\\
\overset{(a)}{\le} &\left(\frac{1}{\left|-J_1\right|}\sum\limits_{k\in -J_1}\omiga(k)\right)\|\omiga^{-1}\|_{l^\infty(-J_1)}+\left(\frac{1}{\left|J_2\right|}\sum\limits_{k\in J_2}\omiga(k)\right)\|\omiga^{-1}\|_{l^\infty(J_2)}\\
& +\left(\frac{1}{\jsJ}\sum\limits_{k\in -J_1}\omiga(k)\right)\max\limits_{k\in J_2}\frac{1}{\omiga(k)}+\left(\frac{1}{\jsJ}\sum\limits_{k\in J_2}\omiga(k)\right)\max\limits_{k\in -J_1}\frac{1}{\omiga(k)}\\
\overset{(b)}{\le} &2\|\omiga\|_{\huaA_1(\N)}+\left(\frac{1}{\jsJ}\sum\limits_{k\in -J_1}\omiga(k)\right)\max\limits_{k\in J_2}\frac{1}{\omiga(k)}+\left(\frac{1}{\jsJ}\sum\limits_{k\in J_2}\omiga(k)\right)\max\limits_{k\in -J_1}\frac{1}{\omiga(k)},
\end{aligned}
\end{equation}
\end{small}
where $(a)$ is due to $\left|-J_1\right|\le\jsJ$ and $\left|J_2\right|\le\jsJ$, and $(b)$ is from $\omiga\in\huaA_1(\N)$.

Let $\Gamma_1:=\left(\frac{1}{\jsJ}\sum\limits_{k\in -J_1}\omiga(k)\right)\max\limits_{k\in J_2}\frac{1}{\omiga(k)}+\left(\frac{1}{\jsJ}\sum\limits_{k\in J_2}\omiga(k)\right)\max\limits_{k\in -J_1}\frac{1}{\omiga(k)}$. To estimate $\Gamma_1$, we consider the following two cases.

\textbf{Case I:} If $-J_1\subset J_2$,
then
\begin{small}
\begin{equation*}
\begin{aligned}
\Gamma_1&\le\left(\frac{1}{\left|J_2\right|}\sum\limits_{k\in J_2}\omiga(k)\right)\max\limits_{k\in J_2}\frac{1}{\omiga(k)}+\left(\frac{1}{\left|J_2\right|}\sum\limits_{k\in J_2}\omiga(k)\right)\max\limits_{k\in J_2}\frac{1}{\omiga(k)}\\
&\le 2\|\omiga\|_{\huaA_1(\N)}.
\end{aligned}
\end{equation*}
\end{small}

\textbf{Case II:} If $J_2\subset -J_1\cup\{0\}=:J_3$, then
\begin{small}
\begin{equation*}
\begin{aligned}
\Gamma_1&\le\left(\frac{1}{\left|J_3\right|}\sum\limits_{k\in J_3}\omiga(k)\right)\max\limits_{k\in J_3}\frac{1}{\omiga(k)}+\left(\frac{1}{\left|J_2\right|}\sum\limits_{k\in J_3}\omiga(k)\right)\max\limits_{k\in J_3}\frac{1}{\omiga(k)}\\
&\le 2\|\omiga\|_{\huaA_1(\N)}.
\end{aligned}
\end{equation*}
\end{small}
Combining all the estimates for Case I and Case II of $\Gamma_1$, (\ref{eq2.6}) implies that
\begin{equation*}
\frac{1}{\jsJ}\sum\limits_{k\in J}\omiga(\kdjdz)\|\omiga^{-1}\|_{l^\infty}\le 4\|\omiga\|_{\huaA_1(\N)}.
\end{equation*}

When $p>1$, we have
\begin{small}
\begin{align}\label{eq2.4}
&\bigg(\frac{1}{\jsJ}\sum\limits_{k\in J}\omiga(\kdjdz)\bigg)\bigg(\frac{1}{\jsJ}\sum\limits_{k\in J}\omiga(\kdjdz)^{-\frac{1}{p-1}}\bigg)^{p-1}\nonumber\\
=&\bigg[\frac{1}{\jsJ}\bigg(\sum\limits_{k\in J_1}\omiga(\kdjdz)+\sum\limits_{k\in J_2}\omiga(k)\bigg)\bigg]\bigg[\frac{1}{\jsJ}\bigg(\sum\limits_{k\in J_1}\omiga(\kdjdz)^{-\frac{1}{p-1}}+\sum\limits_{k\in J_2}\omiga(k)^{-\frac{1}{p-1}}\bigg)\bigg]^{p-1}\nonumber\\
=&\bigg(\frac{1}{\jsJ}\sum\limits_{k\in -J_1}\omiga(k)+\frac{1}{\jsJ}\sum\limits_{k\in J_2}\omiga(k)\bigg)\bigg(\frac{1}{\jsJ}\sum\limits_{k\in -J_1}\omiga(k)^{-\frac{1}{p-1}}+\frac{1}{\jsJ}\sum\limits_{k\in J_2}\omiga(k)^{-\frac{1}{p-1}}\bigg)^{p-1}\nonumber\\
\overset{(c)}{\le}&C_{2,p}\bigg(\frac{1}{\jsJ}\sum\limits_{k\in -J_1}\omiga(k)+\frac{1}{\jsJ}\sum\limits_{k\in J_2}\omiga(k)\bigg)\bigg[\bigg(\frac{1}{\jsJ}\sum\limits_{k\in -J_1}\omiga(k)^{-\frac{1}{p-1}}\bigg)^{p-1}+\bigg(\frac{1}{\jsJ}\sum\limits_{k\in J_2}\omiga(k)^{-\frac{1}{p-1}}\bigg)^{p-1}\bigg]\nonumber\\
=&C_{2,p}\bigg[\bigg(\frac{1}{\jsJ}\sum\limits_{k\in -J_1}\omiga(k)\bigg)\bigg(\frac{1}{\jsJ}\sum\limits_{k\in -J_1}\omiga(k)^{-\frac{1}{p-1}}\bigg)^{p-1}+\bigg(\frac{1}{\jsJ}\sum\limits_{k\in -J_1}\omiga(k)\bigg)\bigg(\frac{1}{\jsJ}\sum\limits_{k\in J_2}\omiga(k)^{-\frac{1}{p-1}}\bigg)^{p-1}\nonumber\\
&+\bigg(\frac{1}{\jsJ}\sum\limits_{k\in J_2}\omiga(k)\bigg)\bigg(\frac{1}{\jsJ}\sum\limits_{k\in -J_1}\omiga(k)^{-\frac{1}{p-1}}\bigg)^{p-1}+\bigg(\frac{1}{\jsJ}\sum\limits_{k\in J_2}\omiga(k)\bigg)\bigg(\frac{1}{\jsJ}\sum\limits_{k\in J_2}\omiga(k)^{-\frac{1}{p-1}}\bigg)^{p-1}\bigg]\nonumber\\
\overset{(d)}{\le}& C_{2,p}\bigg[\bigg(\frac{1}{\left|-J_1\right|}\sum\limits_{k\in -J_1}\omiga(k)\bigg)\bigg(\frac{1}{\left|-J_1\right|}\sum\limits_{k\in -J_1}\omiga(k)^{-\frac{1}{p-1}}\bigg)^{p-1}+\bigg(\frac{1}{\jsJ}\sum\limits_{k\in -J_1}\omiga(k)\bigg)\nonumber\\
&\cdot\bigg(\frac{1}{\jsJ}\sum\limits_{k\in J_2}\omiga(k)^{-\frac{1}{p-1}}\bigg)^{p-1}+\bigg(\frac{1}{\jsJ}\sum\limits_{k\in J_2}\omiga(k)\bigg)\bigg(\frac{1}{\jsJ}\sum\limits_{k\in -J_1}\omiga(k)^{-\frac{1}{p-1}}\bigg)^{p-1}+\bigg(\frac{1}{\left|J_2\right|}\sum\limits_{k\in J_2}\omiga(k)\bigg)\nonumber\\
&\cdot\bigg(\frac{1}{\left|J_2\right|}\sum\limits_{k\in J_2}\omiga(k)^{-\frac{1}{p-1}}\bigg)^{p-1}\bigg]\nonumber\\
\overset{(e)}{\le}&C_{2,p}\bigg[2\|\omiga\|_{\huaA_p(\N)}+\bigg(\frac{1}{\jsJ}\sum\limits_{k\in -J_1}\omiga(k)\bigg)\bigg(\frac{1}{\jsJ}\sum\limits_{k\in J_2}\omiga(k)^{-\frac{1}{p-1}}\bigg)^{p-1}+\bigg(\frac{1}{\jsJ}\sum\limits_{k\in J_2}\omiga(k)\bigg)\nonumber\\
&\cdot\bigg(\frac{1}{\jsJ}\sum\limits_{k\in -J_1}\omiga(k)^{-\frac{1}{p-1}}\bigg)^{p-1}\bigg],
\end{align}
\end{small}
where $(c)$ is due to $(\sum\limits_{k=1}^na_k)^p\le C_{n,p}\sum\limits_{k=1}^n\left|a_k\right|^p$, $(d)$ is from $\left|-J_1\right|\le\jsJ$ and $\left|J_2\right|\le\jsJ$, and $(e)$ is from $\omiga\in\huaA_p$ $(p>1)$.

\begin{small}
Let $\Gamma_2:=\bigg(\frac{1}{\jsJ}\sum\limits_{k\in -J_1}\omiga(k)\bigg)\bigg(\frac{1}{\jsJ}\sum\limits_{k\in J_2}\omiga(k)^{-\frac{1}{p-1}}\bigg)^{p-1}+\bigg(\frac{1}{\jsJ}\sum\limits_{k\in J_2}\omiga(k)\bigg)\bigg(\frac{1}{\jsJ}\sum\limits_{k\in -J_1}\omiga(k)^{-\frac{1}{p-1}}\bigg)^{p-1}$.
\end{small}
To estimate $\Gamma_2$, we consider the following two cases.

\textbf{Case I:} If $-J_1\subset J_2$, then
\begin{small}
\begin{equation*}\begin{array}{rcl}
\Gamma&\le&\bigg(\frac{1}{\left|J_2\right|}\sum\limits_{k\in J_2}\omiga(k)\bigg)\bigg(\frac{1}{\left|J_2\right|}\sum\limits_{k\in J_2}\omiga(k)^{-\frac{1}{p-1}}\bigg)^{p-1}+\bigg(\frac{1}{\left|J_2\right|}\sum\limits_{k\in J_2}\omiga(k)\bigg)\bigg(\frac{1}{\left|J_2\right|}\sum\limits_{k\in J_2}\omiga(k)^{-\frac{1}{p-1}}\bigg)^{p-1}\\
&\le&2\|\omiga\|_{\huaA_p(\N)}.
\end{array}\end{equation*}
\end{small}

\textbf{Case II:} If $J_2\subset -J_1\cup\{0\}=:J_3$, then
\begin{small}
\begin{equation*}\begin{array}{rcl}
\Gamma&\le&\bigg(\frac{1}{\left|J_3\right|}\sum\limits_{k\in J_3}\omiga(k)\bigg)\bigg(\frac{1}{\left|J_3\right|}\sum\limits_{k\in J_3}\omiga(k)^{-\frac{1}{p-1}}\bigg)^{p-1}+\bigg(\frac{1}{\left|J_3\right|}\sum\limits_{k\in J_3}\omiga(k)\bigg)\bigg(\frac{1}{\left|J_3\right|}\sum\limits_{k\in J_3}\omiga(k)^{-\frac{1}{p-1}}\bigg)^{p-1}\\
&\le&2\|\omiga\|_{\huaA_p(\N)}.
\end{array}\end{equation*}
\end{small}
Combining all the estimates for Case I and Case II of $\Gamma_2$, (\ref{eq2.4}) imply that
\begin{align*}
\bigg(\frac{1}{\jsJ}\sum\limits_{k\in J}\omiga(\kdjdz)\bigg)\bigg(\frac{1}{\jsJ}\sum\limits_{k\in J}\omiga(\kdjdz)^{-\frac{1}{p-1}}\bigg)^{p-1}\le4C_{2,p}\|\omiga\|_{\huaA_p(\N)}.
\end{align*}
We finish the proof of Proposition \ref{m2.15}.
\end{proof}

\begin{remark}
Let $1<p<\infty$. By \cite[Page 8]{wiener20} and Proposition \ref{m2.15}, we may obtain the relationships among weights $\huaA_p(\R_+)$, $\huaA_p(\N)$ and $\huaA_p(\Z)$:
\begin{equation*}\begin{array}{rcl}
\mu\in\huaA_p(\R_{+})\iff\{\mu(k)\}^\infty_{k=0}\in\huaA_p(\N)\Rightarrow\{\mu(\kdjdz)\}_{k\in\Z}\in\huaA_p(\Z).
\end{array}\end{equation*}
\end{remark}

Now, we recall the definition of discrete reverse H\"older class.

\begin{definition}\label{l2.1}\cite[Definition 1.4]{fu}
For $1<r<\infty$, a discrete weight $\omiga$ is said to belong to the {\it discrete reverse H\"older class} $RH_r=RH_r(\Z)$ if
\begin{equation*}
\|\omiga\|_{HR_r(\Z)}:=\sup\limits_{J\subset\Z}|J|^{1-\frac{1}{r}}\left(\sum\limits_{k\in J}\omiga(k)^r\right)^\frac{1}{r}\left(\sum\limits_{k\in J}\omiga(k)\right)^{-1}<\infty,
\end{equation*}
where $\|\omiga\|_{HR_r(\Z)}$ denotes the reverse H\"older norm of weight $\omiga$ and $J$ is any bounded interval in $\Z$.

\end{definition}

In the next proposition, we recall some basic inclusion relations of Muckenhoupt classes and reverse H\"older classes.

\begin{proposition}\label{p1.2}\cite[Lemma 2.1]{fu}
The following inclusion relations hold:
\begin{itemize}
\item[\rm(i)] If $1\le r\le p<\infty$, then $\huaA_1\subset\huaA_r\subset\huaA_p$;
\item[\rm(ii)] If $1\le r<p$, then $RH_p\subset RH_r$;
\item[\rm(iii)] If $\omiga\in\huaA_\infty$, then there exists a constant $r\in(1, \infty)$ such that $\omiga\in RH_r$.
\end{itemize}
\end{proposition}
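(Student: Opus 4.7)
The plan is to reduce (i) and (ii) to Hölder's / Jensen's inequality applied pointwise on an arbitrary bounded interval $J\subset\Z$, and to treat (iii) via a self-improvement argument based on a Calderón-Zygmund decomposition, which is the only genuinely analytic step.

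For (i), the inclusion $\huaA_1\subset\huaA_r$ I would get directly from the $\huaA_1$ definition: on each interval $J$ one has $\omiga(k)^{-1/(r-1)}\le(\inf_{k\in J}\omiga(k))^{-1/(r-1)}$, hence
\begin{equation*}
\left(\frac{1}{|J|}\sum_{k\in J}\omiga(k)^{-\frac{1}{r-1}}\right)^{r-1}\le\frac{1}{\inf_{k\in J}\omiga(k)},
\end{equation*}
which multiplied by $\frac{1}{|J|}\sum_J\omiga\le\|\omiga\|_{\huaA_1}\inf_J\omiga$ yields $\|\omiga\|_{\huaA_r}\le\|\omiga\|_{\huaA_1}$. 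For $\huaA_r\subset\huaA_p$ with $r\le p$, set $f:=\omiga^{-1/(r-1)}$ and $s:=(r-1)/(p-1)\in(0,1]$. Jensen's inequality applied to the concave map $x\mapsto x^s$ gives $\frac{1}{|J|}\sum_J f^s\le(\frac{1}{|J|}\sum_J f)^s$, so
\begin{equation*}
\left(\frac{1}{|J|}\sum_{k\in J}\omiga(k)^{-\frac{1}{p-1}}\right)^{p-1}\le\left(\frac{1}{|J|}\sum_{k\in J}\omiga(k)^{-\frac{1}{r-1}}\right)^{r-1},
\end{equation*}
and multiplication by $\frac{1}{|J|}\sum_J\omiga$ shows $\|\omiga\|_{\huaA_p}\le\|\omiga\|_{\huaA_r}$.

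For (ii), I would apply Hölder's inequality with conjugate exponents $p/r$ and $p/(p-r)$:
\begin{equation*}
\sum_{k\in J}\omiga(k)^r\le\left(\sum_{k\in J}\omiga(k)^p\right)^{r/p}|J|^{1-r/p},
\end{equation*}
from which $(\sum_J\omiga^r)^{1/r}\le|J|^{1/r-1/p}(\sum_J\omiga^p)^{1/p}$. Multiplying by $|J|^{1-1/r}(\sum_J\omiga)^{-1}$ gives $\|\omiga\|_{RH_r}\le\|\omiga\|_{RH_p}$, hence $RH_p\subset RH_r$.

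The main obstacle is (iii), the self-improvement of $\huaA_\infty$ weights to a reverse Hölder class, which is the classical Coifman-Fefferman phenomenon. My plan is to invoke the discrete Calderón-Zygmund decomposition established in Section~\ref{s3} of the paper: starting from $\omiga\in\huaA_p$ for some finite $p$, on a fixed interval $J_0$ I would decompose the level sets $\{k\in J_0:\omiga(k)>\lambda\}$ for $\lambda>\frac{1}{|J_0|}\sum_{J_0}\omiga$ into a pairwise disjoint family of maximal intervals $\{J_j\}$ with $\lambda<\frac{1}{|J_j|}\sum_{J_j}\omiga\le C\lambda$. The $\huaA_p$ condition then provides the good-$\lambda$ estimate $\sum_{J_j}\omiga^{-1/(p-1)}\le C\lambda^{-1/(p-1)}|J_j|$, which after summation and integration against $\lambda^{\varepsilon}\,d\lambda$ yields
\begin{equation*}
\sum_{k\in J_0}\omiga(k)^{1+\varepsilon}\le C|J_0|^{-\varepsilon}\left(\sum_{k\in J_0}\omiga(k)\right)^{1+\varepsilon}
\end{equation*}
for a sufficiently small $\varepsilon>0$ depending only on $p$ and $\|\omiga\|_{\huaA_p}$. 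This is precisely the $RH_{1+\varepsilon}$ bound, completing (iii). All estimates in this iteration are identical to the continuous case once the discrete Calderón-Zygmund machinery of Section~\ref{s3} is in hand; the only thing to check is that the doubling of counting measure (Remark~\ref{r2.10}) and the $\huaA_p$ definition interact with the dyadic-type stopping time on $\Z$ as they do on $\R$.
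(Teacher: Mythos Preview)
The paper does not actually prove this proposition; it is quoted from \cite[Lemma~2.1]{fu} without argument, so there is no ``paper's own proof'' to compare against. Your proofs of (i) and (ii) are the standard Jensen/H\"older arguments and are correct as written (with the minor caveat that $RH_r$ is only defined for $r>1$, so the endpoint $r=1$ in (ii) should be read as $r>1$).

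For (iii) your outline is the right Coifman--Fefferman self-improvement scheme, but two points deserve care. First, the Calder\'on--Zygmund decomposition in Section~\ref{s3} (Theorem~\ref{t3.2}) is a \emph{weighted} decomposition on all of~$\Z$; what you actually need is the \emph{unweighted} decomposition of $\omiga$ \emph{localized} to the fixed interval $J_0$ (i.e.\ start from $J_0$, bisect dyadically, and stop when the unweighted average of $\omiga$ exceeds $\lambda$). This is an easy adaptation---indeed it is the special case $\omiga\equiv1$, $x=\omiga\cdot\kaf_{J_0}$ of Theorem~\ref{t3.2}, run inside $J_0$---but it is not literally what Section~\ref{s3} states. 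Second, the passage ``after summation and integration against $\lambda^{\varepsilon}\,d\lambda$'' hides the crux of the iteration: one writes $\sum_{J_0}\omiga^{1+\varepsilon}=\varepsilon\int_0^\infty\lambda^{\varepsilon-1}\bigl(\sum_{\{k\in J_0:\omiga(k)>\lambda\}}\omiga(k)\bigr)\,d\lambda$, splits at $\lambda=\omiga_{J_0}:=|J_0|^{-1}\omiga(J_0)$, and for $\lambda>\omiga_{J_0}$ uses the stopping intervals together with the $\huaA_p$ bound you wrote to control $\sum_{\{k:\omiga(k)>\lambda\}}\omiga(k)$ in terms of $\sum_{\{k:\omiga(k)>\lambda\}}\omiga(k)^{1+\varepsilon}$ with a small constant; the resulting inequality can then be absorbed for $\varepsilon$ small enough. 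Your line about ``$\sum_{J_j}\omiga^{-1/(p-1)}\le C\lambda^{-1/(p-1)}|J_j|$'' is correct and is exactly the ingredient needed, but the integration step should be spelled out so that the absorption is visible. With these two clarifications the argument goes through in the discrete setting exactly as on~$\R$.
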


By Proposition \ref{p1.2} $\rm(iii)$, we immediately obtain the following Proposition \ref{p1.3}, the proof of which in continuous version can be found in \cite{lu}. The same proof works here.

\begin{proposition}\label{p1.3}
If $\omiga\in\huaA_p$ $(1<p<\infty)$, then there exists a constant $\epsilon >0$ such that $p-\epsilon>1$ and $\omiga\in\huaA_{p-\epsilon}$.
\end{proposition}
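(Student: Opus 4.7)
The plan is to transplant the classical openness argument for Muckenhoupt weights, which rests on two ingredients: the duality of $\huaA_p$ under the involution $\omiga\mapsto \omiga^{-1/(p-1)}$, and the self-improvement of $\huaA_\infty$ provided by Proposition \ref{p1.2}(iii). Since Propositions \ref{p1.2} and \ref{m2.8} already mirror their continuous analogues, the algebra should go through verbatim once the duality is recorded.

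First, I would verify the duality step: given $\omiga\in\huaA_p$, the dual weight $\sigma:=\omiga^{-1/(p-1)}$ lies in $\huaA_{p'}$ with $p'=p/(p-1)$. This is immediate from Definition \ref{d2.6}, because $p'-1=1/(p-1)$ forces $\sigma^{-1/(p'-1)}=\omiga$, so the $\huaA_{p'}$-expression for $\sigma$ is precisely the $1/(p-1)$-th power of the $\huaA_p$-expression for $\omiga$. In particular $\|\sigma\|_{\huaA_{p'}}=\|\omiga\|_{\huaA_p}^{1/(p-1)}<\infty$, hence $\sigma\in\huaA_\infty$, and Proposition \ref{p1.2}(iii) supplies an $r\in(1,\infty)$ with $\sigma\in RH_r$.

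Next, I would choose $\epsilon$ so that $q:=p-\epsilon>1$ and $(p-1)/(q-1)=r$; the explicit value $\epsilon=(p-1)(1-1/r)\in(0,p-1)$ works. With this choice $\omiga^{-1/(q-1)}=\sigma^{\,(p-1)/(q-1)}=\sigma^{r}$, so rewriting the reverse H\"older bound from Definition \ref{l2.1} applied to $\sigma$ on any bounded interval $J\subset\Z$ as $\left(\frac{1}{\jsJ}\sum_{k\in J}\sigma(k)^r\right)^{1/r}\le C\,\frac{1}{\jsJ}\sum_{k\in J}\sigma(k)$, then raising both sides to the power $q-1$ and using $r(q-1)=p-1$, yields
\begin{equation*}
\left(\frac{1}{\jsJ}\sum_{k\in J}\omiga(k)^{-1/(q-1)}\right)^{q-1}\le C^{\,p-1}\left(\frac{1}{\jsJ}\sum_{k\in J}\omiga(k)^{-1/(p-1)}\right)^{p-1}.
\end{equation*}
Multiplying by $\frac{1}{\jsJ}\sum_{k\in J}\omiga(k)$ and taking the supremum over bounded intervals $J\subset\Z$, the $\huaA_p$-condition on $\omiga$ delivers $\|\omiga\|_{\huaA_{p-\epsilon}}\lesssim\|\omiga\|_{\huaA_p}<\infty$.

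The only real obstacle is the duality verification; once $\sigma\in RH_r$ is in hand, the proof reduces to exponent bookkeeping. Proposition \ref{p1.2}(iii) guarantees that the passage from $\huaA_\infty$ to $RH_r$ transfers to $\Z$, and no additional discrete-specific lemma is needed.
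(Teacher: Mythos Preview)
Your argument is correct and is precisely the classical continuous proof that the paper defers to: apply Proposition \ref{p1.2}(iii) to the dual weight $\sigma=\omiga^{-1/(p-1)}\in\huaA_{p'}\subset\huaA_\infty$ to obtain $\sigma\in RH_r$, then choose $\epsilon$ so that $r(q-1)=p-1$ and read off the $\huaA_{p-\epsilon}$ bound. The paper gives no details beyond citing \cite{lu} and noting that the same proof works on $\Z$, so your write-up fills in exactly what is intended.
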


By H\"older's inequality, we immediately obtain the following proposition.
\begin{proposition}\label{pro3}
Let $r>1$ and $\omiga\in RH_r$. Then for any subset $S\subset\,\text{interval}\,J\subset\Z$, it holds true that $\frac{\omiga(S)}{\omiga(J)}\le C\left(\frac{|S|}{|J|}\right)^\frac{1}{r'}$.
\end{proposition}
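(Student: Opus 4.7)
The plan is to reduce the proposition to a direct application of Hölder's inequality combined with the defining inequality of the reverse Hölder class $RH_r$. The key observation is that $\omega(S) = \sum_{k \in J} \omega(k)\,\kaf_{S}(k)$, which suggests splitting the factor $\omega(k)\cdot 1$ and estimating the weight-part in $l^r$ and the characteristic-function-part in $l^{r'}$.

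First, I would write
\begin{equation*}
\omega(S) = \sum_{k\in J}\omega(k)\,\kaf_{S}(k)
\end{equation*}
and apply Hölder's inequality with conjugate exponents $r$ and $r'$ to obtain
\begin{equation*}
\omega(S) \le \bigg(\sum_{k\in J}\omega(k)^{r}\bigg)^{\frac{1}{r}}\bigg(\sum_{k\in J}\kaf_{S}(k)^{r'}\bigg)^{\frac{1}{r'}}
= \bigg(\sum_{k\in J}\omega(k)^{r}\bigg)^{\frac{1}{r}}|S|^{\frac{1}{r'}}.
\end{equation*}
Next, I would invoke the definition of $RH_r$ (Definition \ref{l2.1}), which gives
\begin{equation*}
\bigg(\sum_{k\in J}\omega(k)^{r}\bigg)^{\frac{1}{r}} \le C\,|J|^{\frac{1}{r}-1}\,\omega(J) = C\,|J|^{-\frac{1}{r'}}\,\omega(J).
\end{equation*}

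Combining the two displays yields
\begin{equation*}
\omega(S) \le C\,\omega(J)\,|J|^{-\frac{1}{r'}}|S|^{\frac{1}{r'}} = C\,\omega(J)\bigg(\frac{|S|}{|J|}\bigg)^{\frac{1}{r'}},
\end{equation*}
and dividing both sides by $\omega(J)$ gives the desired inequality. There is no substantive obstacle here: the proof is a two-line computation once one recognizes that $RH_r$ is essentially the statement that Hölder's inequality can be reversed (up to a constant) for $\omega$ against characteristic functions, and the only thing to verify is the bookkeeping of the exponents $\tfrac{1}{r} - 1 = -\tfrac{1}{r'}$.
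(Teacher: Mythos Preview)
Your proof is correct and is exactly the argument the paper has in mind: the authors simply state ``By H\"older's inequality, we immediately obtain the following proposition'' without writing out the details, and your computation is precisely the intended two-line justification.
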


\section{The Calder\'on-Zygmund decomposition for weighted sequences \label{s3}}
Swarup and Alphonse \cite{bzb} discussed the discrete Calder\'on-Zygmund decomposition for $l^p$-sequences $(1\le p<\infty)$. Garcia-Cuerva and Rubio de Francia \cite{norm} discussed the continuous Calder\'on-Zygmund decomposition for weighted $L^1$-functions. Combining the methods of \cite{bzb} and \cite{norm}, we obtain the discrete Calder\'on-Zygmund decomposition for weighted $l^1$-sequences, i.e., the following Theorem \ref{t3.2}, which will be used to obtain the boundedness of discrete weighted Hardy-Littlewood maximal operators on discrete weighted Lebesgue spaces in Sect.\,\ref{s4.1}.

\begin{theorem}\label{t3.2}
Let $1\le p<\infty$, $\omiga\in\huaA_p$ and $x\in l^1_\omiga$. For every $t>0$, there exist a constant $C$ only depending on $p$ and a sequence of disjoint intervals $\mathcal{I}_t=\mathcal{I}_t(x,\omiga)$, consisting of those maximal dyadic over which the average of $|x|$ relative to $\omiga$ is greater than $t$, such that
\begin{itemize}
\item[\rm(i)] $t<\frac{1}{\omiga(I)}\sum\limits_{k\in I}\left|x(k)\right|\omiga(k)\le Ct$, $\forall$  $I\in \mathcal{I}_t$;
\item[\rm(ii)] $|x(n)|\le t$, $\forall$ $n\notin  \mathop{\cup}\limits_{I\in \mathcal{I}_t}I$;
\item[\rm(iii)] If $t_1>t_2$, then every dyadic interval in $\mathcal{I}_{t_1}$ is a subinterval of some dyadic interval in $\mathcal{I}_{t_2}$.
\end{itemize}
\end{theorem}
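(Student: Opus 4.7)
The plan is to run the dyadic Calder\'on--Zygmund stopping-time argument in the weighted discrete setting, with the doubling control supplied by Proposition \ref{m2.9}. Fix $t>0$ and consider the dyadic grid $\mathcal{D}:=\{I_{n,j}:n\in\N,\,j\in\Z\}$ from Proposition \ref{m2.9}: any two elements of $\mathcal{D}$ are disjoint or nested, every $I\in\mathcal{D}$ of level $n$ has a unique dyadic parent $\hat I$ of level $n+1$ with $|\hat I|=2|I|$, and the singletons $\{k\}$ lie in $\mathcal{D}$. I would then define $\mathcal{I}_t$ to be the collection of all $I\in\mathcal{D}$ which are maximal with respect to inclusion among dyadic intervals satisfying $\frac{1}{\omiga(I)}\sum_{k\in I}|x(k)|\omiga(k)>t$. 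Maximality immediately forces $\mathcal{I}_t$ to be pairwise disjoint, since two overlapping dyadic intervals are necessarily nested and the inner one could not be maximal.

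Before declaring the selection well-defined one has to check that the stopping time actually stops. For each $k$ with $|x(k)|>t$, look at the ascending chain $\{k\}\subset I_{1,\lfloor k/2\rfloor}\subset I_{2,\lfloor k/4\rfloor}\subset\cdots$; the numerator is bounded by $\|x\|_{l^1_\omiga}<\infty$, while the denominator diverges. Indeed, by Propositions \ref{p1.2}(iii) and \ref{pro3}, $\omiga\in\huaA_p\subset\huaA_\infty\subset RH_r$ for some $r>1$, and applying Proposition \ref{pro3} with $S=\{k\}$ and $J=I_{n,\lfloor k/2^n\rfloor}$ yields $\omiga(I_{n,\lfloor k/2^n\rfloor})\ge C^{-1}\,2^{n/r'}\omiga(k)\to\infty$. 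Consequently a largest dyadic interval $I\ni k$ with average exceeding $t$ exists, and the collection $\mathcal{I}_t$ is precisely the union of all such maximal intervals as $k$ varies.

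Properties (i)--(iii) then fall out directly. For (i), the lower bound is the selection criterion, and for the upper bound the parent $\hat I$ of any $I\in\mathcal{I}_t$ fails that criterion, so $\sum_{k\in\hat I}|x(k)|\omiga(k)\le t\,\omiga(\hat I)$; Proposition \ref{m2.9}(ii) with $n=2$ then gives $\omiga(\hat I)\le C\cdot 2^p\omiga(I)$, producing the bound $Ct$ with $C$ depending only on $p$ and $\|\omiga\|_{\huaA_p}$. For (ii), if $n\notin\bigcup_{I\in\mathcal{I}_t}I$, then $\{n\}\in\mathcal{D}$ is not selected and is not covered by any selected interval, so its $\omiga$-average, which equals $|x(n)|$, cannot exceed $t$; otherwise the maximal dyadic superinterval of $\{n\}$ with average $>t$ would belong to $\mathcal{I}_t$ and contain $n$. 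For (iii), any $I\in\mathcal{I}_{t_1}$ has average $>t_1>t_2$, so $I$ is a dyadic interval with average $>t_2$ and is therefore contained in the unique maximal such interval appearing in $\mathcal{I}_{t_2}$.

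The main obstacle is exactly the termination step in the second paragraph: without a lower estimate showing $\omiga(I)$ genuinely grows to infinity along dyadic ascending chains one could imagine the stopping time never stopping, so the maximal selection would be ill-defined. Once this is secured through the $\huaA_\infty\subset RH_r$ inclusion and Proposition \ref{pro3}, the remainder is clean disjointness/maximality bookkeeping combined with the doubling upper bound for the parent from Proposition \ref{m2.9}(ii).
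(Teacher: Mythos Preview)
Your proposal is correct and follows essentially the same route as the paper: define $\mathcal{I}_t$ as the maximal dyadic intervals whose $\omiga$-average of $|x|$ exceeds $t$, obtain the upper bound in (i) by passing to the dyadic parent and invoking the doubling estimate of Proposition \ref{m2.9}(ii), deduce (ii) from the fact that the singleton $\{n\}$ fails the selection criterion, and read off (iii) directly from maximality. The one place where you go beyond the paper is the termination step: the paper simply asserts that every dyadic interval satisfying the average condition is contained in some member of $\mathcal{I}_t$, whereas you justify the existence of maximal intervals by using $\huaA_p\subset\huaA_\infty\subset RH_r$ together with Proposition \ref{pro3} to force $\omiga(I_n)\to\infty$ along any ascending dyadic chain. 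This extra care is legitimate and tightens a point the paper leaves implicit.
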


\begin{proof}
Let
\begin{equation*}
I_{N,j}=\{(j-1)2^N+1,\cdots,j2^N\},\quad N\in\N,\,j\in\Z.
\end{equation*}
The set of all intervals which are of the form $I_{N,j}$, $N\in\N$ and $j\in\Z$, are called
dyadic intervals. We denote by $\mathcal{I}_t=\mathcal{I}_t(x,\omiga)$ the collection formed by the maximal dyadic intervals $I$ satisfying the condition
\begin{equation}\label{eq1.2}
t<\frac{1}{\omiga(I)}\sum\limits_{k\in I}|x(k)|\omiga(k).
\end{equation}
Every dyadic interval satisfying (\ref{eq1.2}) is contained in some member of $\mathcal{I}_t$. Pick a maximal dyadic interval $I\in \mathcal{I}_t$. Then there exist some $N\in\N$ and some $j\in\Z$ such that $I=I_{N,j}$. Furthermore, there exists dyadic interval $I_{N+1,m}\supset I_{N,j}$ such that
\begin{equation*}
\frac{1}{\omiga(I_{{N+1},m})}\sum\limits_{I_{{N+1},m}}|x(k)|\omiga(k)\le t.
\end{equation*}
By simple geometric observation, we have $I_{{N+1},m}=2LI_{N,j}$ or $I_{{N+1},m}=2RI_{N,j}$, where
\begin{equation*}
\begin{aligned}
&2LI_{N,j}:=\{(j-2)2^N+1,\cdots,j2^N\},\\
&2RI_{N,j}:=\{(j-1)2^N+1,\cdots,(j+1)2^N\}.
\end{aligned}
\end{equation*}
From Proposition \ref{m2.9}, it follows that $\omiga(I_{{N+1},m})=\omiga(2LI_{N,j})\le C\omiga(I_{N,j})$ or $\omiga(I_{{N+1},m})=\omiga(2RI_{N,j})\le C\omiga(I_{N,j})$. Therefore
\begin{equation*}
\frac{1}{\omiga(I_{N,j})}\sum\limits_{k\in I_{N,j}}|x(k)|\omiga(k)\le \frac{C}{\omiga(I_{{N+1},m})}\sum\limits_{k\in I_{N+1,m}}|x(k)|\omiga(k)\le Ct.
\end{equation*}
Thus, for every  $I\in\mathcal{I}_t$, we obtain
\begin{equation*}
t<\frac{1}{\omiga(I)}\sum\limits_{k\in I}|x(k)|\omiga(k)\le Ct.
\end{equation*}
This proves $\rm(i)$.

$\rm(ii)$ Now all elements not included in $\mathop{\cup}\limits_{I\in \mathcal{I}_t}I$ form a set $S$ such that for every $n\in S$, we have $|x(n)|\le t$. Otherwise, if $|x(n)|>t$ for some $n\in S$, then we have
$$
|x(n)|=\frac{1}{\omiga(\{n\})}\sum\limits_{k\in\{n\}}|x(k)|\omiga(k)>t,
$$
thus $\{n\}\subset\mathop{\cup}\limits_{I\in \mathcal{I}_t}I$, which contradict with $n\in S$. This proves $\rm(ii)$.

$\rm(iii)$ If $t_1>t_2>0$, then for any maximal dyadic interval $I=I^{t_1}_{N,j}\in\mathcal{I}_{t_1}$, it holds true that
\begin{equation*}
t_2<t_1<\frac{1}{\omiga(I^{t_1}_{N,j})}\sum\limits_{k\in I^{t_1}_{N,j}}|x(k)|\omiga(k),
\end{equation*}
which implies that $I^{t^1}_{N,j}\in \mathcal{I}_{t_2}$ or $I^{t_1}_{N,j}$ is a subinterval of some maximal dyadic interval in $\mathcal{I}_{t_2}$.
\end{proof}

\section{The weighted estimates for discrete maximal operators \label{s4}}
\subsection{Estimates for discrete weighted Hardy-Littlewood maximal operators on discrete weighted Lebesgue spaces}\label{s4.1}

\begin{definition}\label{d2.16}
Let $x=\{x(k)\}_{k\in\Z}\subset\R$ be a sequence and $\omiga$ be a discrete weight. The {\it discrete weighted Hardy-Littlewood maximal operator} $\M_{\omiga}$ is defined by
\begin{equation*}
\M_{\omiga}x(m):=\sup_{N\in \N}\frac{1}{\omiga(\Smn)}\sum_{k\in \Smn}\left|x(k)\right|\omiga(k),\quad m\in \Z.
\end{equation*}
\end{definition}

\begin{theorem}\label{th4.1}
\begin{itemize}
\item[\rm(i)] Let $\omiga\in\huaA_1$. If $x\in l^1_\omiga$, then for every $t>0$,
\begin{equation*}
\omiga(\{k\in\Z:\M_\omiga x(k)>t\})\le C\|x\|_{l^1_\omiga},
\end{equation*}
where $C$ is a positive constant depending on $p$ and $\|\omiga\|_{\huaA_p(\Z)}$.
\item[\rm(ii)] Let $\omiga\in\huaA_p$. If $x\in l^p_\omiga$, $1< p<\infty$, then $\M_\omiga x\in l^p_\omiga$ and
\begin{equation*}
\|\M_\omiga x\|_{l^p_\omiga}\le C\|x\|_{l^p_\omiga},
\end{equation*}
where $C$ is a positive constant depending on $p$ and $\|\omiga\|_{\huaA_p(\Z)}$.
\end{itemize}
\end{theorem}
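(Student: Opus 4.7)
The plan is to deduce both claims from the discrete weighted Calder\'on-Zygmund decomposition of Theorem \ref{t3.2} combined with the doubling estimate of Proposition \ref{m2.9}, regarding $\M_\omiga$ as the Hardy-Littlewood maximal operator on the doubling measure space $(\Z,\omiga)$.

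For part $\rm(i)$, I would fix $t>0$ and $x\in l^1_\omiga$ and apply Theorem \ref{t3.2} at level $t/C_1$ for a sufficiently large constant $C_1$ to be chosen later, producing pairwise disjoint maximal dyadic intervals $\{I_j\}$ whose $\omiga$-averages of $|x|$ lie in $(t/C_1,\,Ct/C_1]$ and with $|x(k)|\le t/C_1$ off $\Omega:=\bigcup_j I_j$. Then I would perform the standard Calder\'on-Zygmund splitting $x=g+b$, where $g$ equals $x$ on $\Omega^c$ and equals the $\omiga$-weighted mean of $x$ on each $I_j$, while $b=\sum_j b_j$ with $b_j$ supported in $I_j$ and of vanishing $\omiga$-mean. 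Property $\rm(i)$ of Theorem \ref{t3.2} gives $\|g\|_{l^\infty}\le Ct/C_1$, so choosing $C_1$ large yields $\M_\omiga g\le t/2$ pointwise, whence $\{\M_\omiga x>t\}\subset\{\M_\omiga b>t/2\}$.

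The crux of $\rm(i)$ is then to show $\{\M_\omiga b>t/2\}\subset\Omega^*$ for a fixed doubling dilate $\Omega^*$ of $\Omega$. Exploiting that each $b_j$ has vanishing $\omiga$-mean on $I_j$, the only nonzero contributions to the average of $|b|\omiga$ over a ball $\Smn$ centered at $m\notin\Omega^*$ come from those $I_j$ that $\Smn$ intersects but does not contain; in that case the captured mass is bounded by $\|g\|_{l^\infty}\omiga(I_j\cap\Smn)$ plus the analogous piece of $|x|\omiga$, both of which, by the choice of $C_1$, can be absorbed into $t\,\omiga(\Smn)$. Once this inclusion is established, Proposition \ref{m2.9} yields
\begin{equation*}
\omiga(\{\M_\omiga x>t\})\le\omiga(\Omega^*)\le C\sum_j\omiga(I_j)\le\frac{C}{t}\sum_j\sum_{k\in I_j}|x(k)|\omiga(k)\le\frac{C}{t}\|x\|_{l^1_\omiga},
\end{equation*}
which is the desired weak-$(1,1)$ bound.

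For part $\rm(ii)$, I observe that the whole argument above uses only the doubling of $\omiga$ and Theorem \ref{t3.2}, both of which are valid for any $\omiga\in\huaA_p$ by Proposition \ref{m2.9} and the hypothesis of Theorem \ref{t3.2}, so the weak-$(1,1)$ estimate from $\rm(i)$ extends to every $\omiga\in\huaA_p$. I would then invoke Marcinkiewicz interpolation on the measure space $(\Z,\omiga)$ between this weak-$(1,1)$ estimate and the trivial bound $\|\M_\omiga x\|_{l^\infty}\le\|x\|_{l^\infty}$ to obtain $\|\M_\omiga x\|_{l^p_\omiga}\le C\|x\|_{l^p_\omiga}$ for every $1<p<\infty$. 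The principal obstacle lies in the bad-part analysis of $\rm(i)$: unlike continuous balls in $\R^n$, a centered discrete ball $\Smn$ whose center is outside a fixed dilate of a dyadic interval $I_j$ may clip $I_j$ at one endpoint without containing it, so the cancellation $\sum_{\Smn}b_j\omiga=0$ is not directly available. Selecting the expansion factor large enough and using the doubling bounds of Proposition \ref{m2.9} to absorb these boundary contributions is the technical heart of the proof.
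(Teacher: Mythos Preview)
Your overall framework for $\rm(i)$---apply the Calder\'on--Zygmund decomposition of Theorem~\ref{t3.2} at a scaled level, show the exceptional set sits inside a bounded dilate of $\Omega=\bigcup_j I_j$, then use doubling (Proposition~\ref{m2.9}) together with Theorem~\ref{t3.2}$\rm(i)$---is the right skeleton and matches the paper. Your plan for $\rm(ii)$ via Marcinkiewicz interpolation between weak-$(1,1)$ and $l^\infty$ is also exactly what the paper carries out, written out explicitly via the level-set split $x=x\kaf_{\{|x|>t/2\}}+x\kaf_{\{|x|\le t/2\}}$ and Lemma~\ref{l3.11}.

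However, your bad-part analysis in $\rm(i)$ contains a genuine error. You claim that by the vanishing $\omiga$-mean of $b_j$ ``the only nonzero contributions to the average of $|b|\omiga$ over $\Smn$ come from those $I_j$ that $\Smn$ intersects but does not contain.'' This is false: $\M_\omiga$ averages $|b|$, not $b$, so $\sum_{k\in I_j}|b_j(k)|\omiga(k)$ does not vanish even when $I_j\subset\Smn$. The cancellation of $b_j$ is the key tool for singular integrals, but it is useless for the maximal function. Consequently the good/bad split buys you nothing, and your subsequent bound on the ``captured mass'' does not go through as written: many fully contained $I_j$'s can contribute mass of order $t\,\omiga(I_j)$ each.

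The paper sidesteps this entirely by never splitting $x$. It proves directly that $\{k:\M_\omiga x(k)>C_1t\}\subset\bigcup_{I\in\mathcal{I}_t}3I$: given $j\notin\bigcup_{I\in\mathcal{I}_t} 3I$ and any centered interval $J\ni j$, one covers $J$ by two dyadic intervals $R_1,R_2$ of comparable length; since $j\in 2RR_1\cap 2LR_2$ but $j\notin 3I$ for every $I\in\mathcal{I}_t$, neither $R_i$ can lie in $\mathcal{I}_t$ nor be contained in a member of $\mathcal{I}_t$, and hence by \emph{maximality} of $\mathcal{I}_t$ the $\omiga$-average of $|x|$ over each $R_i$ is at most $t$. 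Doubling then yields $\frac{1}{\omiga(J)}\sum_{k\in J}|x(k)|\omiga(k)\le C_1t$. This use of maximality of the stopping family---rather than cancellation of a bad part---is the idea missing from your proposal; once you have it, the $g/b$ detour is unnecessary.
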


\begin{remark}
When $\Z$ is restricted to $\Z_+$, Theorem \ref{th4.1}$\rm(ii)$ coincides with \cite[Lemma 3]{lsq}. In addition, Theorem \ref{th4.1}$\rm(ii)$ is proved by means of a discrete Calder\'on-Zygmund decomposition for weighted $l^1$-sequences, while \cite[Lemma 3]{lsq} is proved via Yang inequality and Hardy operator.
\end{remark}

In order to prove Theorem \ref{th4.1}, we need a lemma for preparation.

\begin{lemma}\label{l3.11}
Let $x\in l{^p_\omiga}$ and $p>0$. Then $\|x\|^p_{l^p_\omiga}=p\int^\infty_0 \lambda^{p-1}\sum\limits_{{\{k:\left|x(k)\right|>\lambda}\}}\omiga(k)d\lambda$.
\end{lemma}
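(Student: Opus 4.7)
The plan is to prove this by the standard layer-cake (distribution function) identity, carried out in the discrete weighted setting via Fubini--Tonelli (Tonelli, since all quantities are nonnegative).

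First, I would start from the pointwise identity
\begin{equation*}
|x(k)|^p \;=\; p\int_0^{|x(k)|}\lambda^{p-1}\,d\lambda \;=\; p\int_0^\infty \lambda^{p-1}\,\kaf_{\{\lambda<|x(k)|\}}(\lambda)\,d\lambda,
\end{equation*}
which is valid for each $k\in\Z$ and each $p>0$. Multiplying by the (nonnegative) weight $\omiga(k)$ and summing over $k\in\Z$ gives
\begin{equation*}
\|x\|_{l^p_\omiga}^p \;=\; \sum_{k\in\Z}|x(k)|^p\omiga(k)
\;=\; \sum_{k\in\Z}\omiga(k)\,p\int_0^\infty \lambda^{p-1}\,\kaf_{\{\lambda<|x(k)|\}}(\lambda)\,d\lambda.
\end{equation*}

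Next, since the integrand and the summands are all nonnegative, Tonelli's theorem (for the product of counting measure on $\Z$ with Lebesgue measure on $(0,\infty)$) lets me interchange the sum and the integral:
\begin{equation*}
\|x\|_{l^p_\omiga}^p \;=\; p\int_0^\infty \lambda^{p-1}\sum_{k\in\Z}\omiga(k)\,\kaf_{\{\lambda<|x(k)|\}}(\lambda)\,d\lambda
\;=\; p\int_0^\infty \lambda^{p-1}\!\!\sum_{\{k:\,|x(k)|>\lambda\}}\!\!\omiga(k)\,d\lambda,
\end{equation*}
which is the desired identity.

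There is essentially no hard step; the only point that requires a brief comment is the justification of the interchange. Since $x\in l^p_\omiga$, the iterated sum/integral on the left is finite, so all nonnegative quantities are well-defined and Tonelli applies without any integrability hypothesis beyond nonnegativity. (If desired one can alternatively truncate to a finite subset of $\Z$, interchange trivially for finite sums, and pass to the limit by the monotone convergence theorem.) No properties of $\omiga$ beyond positivity are used, so the lemma holds for an arbitrary discrete weight.
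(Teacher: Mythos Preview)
Your proof is correct and follows essentially the same approach as the paper: both start from the pointwise identity $|x(k)|^p=p\int_0^{|x(k)|}\lambda^{p-1}\,d\lambda$, rewrite it with an indicator, sum against the weight, and interchange sum and integral. Your version merely makes explicit the Tonelli justification that the paper leaves implicit.
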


\begin{proof}
\begin{equation*}
\begin{aligned}
\|x\|^p_{l^p_\omiga}&=\sum\limits_{k\in\Z}\left|x(k)\right|^p\omiga(k)\\
&= p\sum\limits_{k\in\Z}\left(\int^{\left|x(k)\right|}_0\lambda^{p-1}d\lambda\right)\omiga(k)\\
&= p\sum\limits_{k\in\Z}\left(\int^\infty_0\lambda^{p-1}\kaf_{\{\left|x(k)\right|>\lambda\}}(\lambda)d\lambda\right)\omiga(k)\\
&= p\int^\infty_0\lambda^{p-1}\sum\limits_{k\in \Z}\kaf_{\{\left|x(k)\right|>\lambda\}}(k)\omiga(k)d\lambda\\
&= p\int^\infty_0\lambda^{p-1}\sum\limits_{{\{k:\left|x(k)\right|>\lambda}\}}\omiga(k)d\lambda.
\end{aligned}
\end{equation*}
We finish the proof of Lemma \ref{l3.11}.
\end{proof}

\begin{proof}[Proof of Theorem \ref{th4.1}]
$\rm(i)$ First, let us show
\begin{equation}\label{eq1.3}
\{k\in\Z:\M_\omiga x(k)>C_1t\}\subset\mathop{\cup}\limits_{I\in\mathcal{I}_t}3I.
\end{equation}
For any $j\notin \mathop{\cup}\limits_{I\in\mathcal{I}_t}3I$, we only need to prove $j\notin\{k\in\Z:\M_\omiga x(k)>C_1t\}$, where $C_1$ is some positive constant depending on $p$ and $\|\omiga\|_{\huaA_p(\Z)}$ to be fixed later. Let $J$ be any interval centered on $j$. Choose $N\in \N$ such that $2^N\le |J|<2^{N+1}$. Then $J$ intersects exactly 2 dyadic intervals $R^{N+1}_1$ and $R^{N+1}_2$. Assume $R^{N+1}_1$ intersects $J$ on the left and $R^{N+1}_2$ intersects $J$ on the right. Since $j\notin \mathop{\cup}\limits_{I\in\mathcal{I}_t}3I$, then $j\notin 2RI$ and $j\notin 2LI$ for any $I\in\mathcal{I}_t$. But $j\in 2RR^{N+1}_1$ and $j\in 2LR^{N+1}_2$. Therefore, both $R^{N+1}_1$ and $R^{N+1}_2$ cannot be any one of $\mathcal{I}_t$.
Hence the weighted average of $|x|$ on $R^{N+1}_1$ and $R^{N+1}_2$ are at most $t$. From $R^{N+1}_i\subset 5J, i=1,2$, and Proposition \ref{m2.9} (i), it follows that
\begin{small}
\begin{align*}
&\frac{1}{\omiga(J)}\sum\limits_{s\in J}|x(s)|\omiga(s)\\
\le &\frac{1}{\omiga(J)}\left(\sum\limits_{s\in R^{N+1}_1}|x(s)|\omiga(s)+\sum\limits_{s\in R^{N+1}_2}|x(s)|\omiga(s)\right)\\
=&\frac{\omiga(R^{N+1}_1)}{\omiga(J)}\left(\frac{1}{\omiga(R^{N+1}_1)}\sum\limits_{s\in R^{N+1}_1}|x(s)|\omiga(s)\right)
+\frac{\omiga(R^{N+1}_2)}{\omiga(J)}\left(\frac{1}{\omiga(R^{N+1}_2)}\sum\limits_{s\in R^{N+1}_2}|x(k)|\omiga(s)\right)\\
\le &\left(\frac{15}{2}\|\omiga\|_{\huaA_r(\Z)}\right)^r(t+t)\\
=: &C_1t,
\end{align*}
\end{small}
which implies (\ref{eq1.3}) holds true.

Furthermore, by (\ref{eq1.3}) and Proposition \ref{m2.9} (ii), we have
\begin{small}
\begin{equation}\label{eq1.4}
\sum\limits_{\{k\in\Z:\M_\omiga x(k)>C_1t\}}\omiga(k)\le \sum\limits_{k\in\mathop{\cup}\limits_{I\in\mathcal{I}_t}3I}\omiga(k)\le \sum\limits_{I\in\mathcal{I}_t}\omiga(3I)\le \left(3\|\omiga\|_{\huaA_r(\Z)}\right)^r\sum\limits_{I\in\mathcal{I}_t}\omiga(I)=: C_2\sum\limits_{I\in\mathcal{I}_t}\omiga(I).
\end{equation}
\end{small}

By the Calder\'on-Zygmund decomposition at height $t$ for $x$ and any $I\in\mathcal{I}_t$ (Theorem \ref{t3.2}), we have
\begin{equation*}
t<\frac{1}{\omiga(I)}\sum\limits_{k\in I}|x(k)|\omiga(k).
\end{equation*}
From this and (\ref{eq1.4}), it follows that
\begin{equation}\label{eq1.1}
\omiga(\{k\in\Z:\M_\omiga x(k)>C_1t\})\le \frac{C_2}{t}\sum\limits_{I\in\mathcal{I}_t}\sum\limits_{k\in I}|x(k)|\omiga(k)\le \frac{C_2}{t}\sum\limits_{k\in\Z}|x(k)|\omiga(k).
\end{equation}
Replacing $x$ by $C_1x$, we can obtain weak type (1,1) boundedness result of $\M_\omiga x$, i.e., $\rm(i)$ holds true.

$\rm(ii)$ Let $x=x_1+x_2$, with $x_1(k):=x(k)$ if $|x(k)|>\frac{t}{2}$, and $x_1(k):=0$ otherwise. Then $\M_\omiga x(k)\le\M_\omiga x_1(k)+\M_\omiga x_2(k)\le \M_\omiga x_1(k)+\frac{t}{2}$. By this and (\ref{eq1.1}), we obtain
\begin{align*}
\omiga(\{k\in\Z:\M_\omiga x(k)>t\})&\le\omiga\left(\left\{k\in\Z:\M_\omiga x_1(k)>\frac{t}{2}\right\}\right)\\
&\le\frac{C_2}{\frac{t}{2C_1}}\sum\limits_{k\in\Z}|x_1(k)|\omiga(k)\\
&=:\frac{C}{t}\sum\limits_{\{k:|x(k)|>\frac{t}{2}\}}|x(k)|\omiga(k).
\end{align*}
From this and Lemma \ref{l3.11}, it follows that
\begin{equation*}
\begin{aligned}
\sum\limits_{k\in\Z}|\M_\omiga x(k)|^p\omiga(k)&=p\int^\infty_0t^{p-1}\sum\limits_{\{k:\M_\omiga x(k)>t\}}\omiga(k)dt\\
&\le p\int^\infty_0t^{p-1}\frac{C}{t}\sum\limits_{\{k:|x(k)|>\frac{t}{2}\}}|x(k)|\omiga(k)dt\\
&=Cp\sum\limits_{k\in\Z}\int^{2|x(k)|}_0t^{p-2}dt|x(k)|\omiga(k)\\
&=\frac{C2^{p-1}p}{p-1}\sum\limits_{k\in\Z}|x(k)|^p\omiga(k).
\end{aligned}
\end{equation*}
This finish the proof of Theorem \ref{th4.1}.
\end{proof}

\subsection{Estimate for discrete Hardy-Littlewood maximal operators on discrete weighted Morrey spaces}\label{s4.2}
The $(l^p_q,l^\infty)$-boundedness of Hardy-Littlewood maximal operators is obtained by Gunawan and Schwanke \cite[Lemma 3.1]{jdszzlsm}. The following Theorem \ref{t3.1} shows the $(l^p_\infty,l^\infty)$-boundedness of Hardy-Littlewood maximal operators.

\begin{theorem}\label{t3.1}
Let $1<p<\infty$ and $\omiga\in\huaA_p$. If $x\in l{^p_\infty}_{(\omiga)}$, then $\M x\in l{^\infty}$ and there exists a positive constant $C$ such that $\|\M x\|_{l^\infty}\le C\|x\|_{l{^p_\infty}_{(\omiga)}}$.
\end{theorem}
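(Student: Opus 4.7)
The plan is to reduce the estimate to a direct application of the weighted characterization of the $\mathcal{A}_p$ class (Proposition \ref{m2.8}) combined with the definition of $\|x\|_{l^p_{\infty,(\omega)}}$. The discrete maximal operator $\mathcal{M}$ is unweighted, so I want to insert $\omega(k)^{1/p}\omega(k)^{-1/p}$ into the average over $S_{m,N}$ and exploit the $\mathcal{A}_p$ condition to trade the factor $|S_{m,N}|^{-1}$ in $\mathcal{M}x(m)$ for the factor $\omega(S_{m,N})^{-1/p}$ that appears in the $l^p_{\infty,(\omega)}$-norm.

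First, I fix an arbitrary $m \in \Z$ and $N \in \N$. Since $\omega \in \mathcal{A}_p$ with $1 < p < \infty$, Proposition \ref{m2.8} applied to the bounded interval $J = S_{m,N}$ (which is exactly a finite set of consecutive integers) yields
\begin{equation*}
\frac{1}{|S_{m,N}|}\sum_{k\in S_{m,N}} |x(k)| \;\le\; C\left(\frac{1}{\omega(S_{m,N})}\sum_{k\in S_{m,N}} |x(k)|^p \omega(k)\right)^{1/p},
\end{equation*}
where $C$ depends only on $p$ and $\|\omega\|_{\mathcal{A}_p(\Z)}$. If one prefers to avoid appealing to Proposition \ref{m2.8} as a black box, the same inequality is obtained directly by writing $|x(k)| = |x(k)|\omega(k)^{1/p}\omega(k)^{-1/p}$, applying Hölder's inequality with exponents $p$ and $p' = p/(p-1)$, and then using the $\mathcal{A}_p$ bound on $\left(\sum_{k \in S_{m,N}} \omega(k)^{-1/(p-1)}\right)^{p-1}$ to replace it by $\|\omega\|_{\mathcal{A}_p}|S_{m,N}|^p/\omega(S_{m,N})$.

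Next, the right-hand side is rewritten as
\begin{equation*}
C\,\omega(S_{m,N})^{-1/p}\left(\sum_{k\in S_{m,N}} |x(k)|^p \omega(k)\right)^{1/p},
\end{equation*}
which, by Remark \ref{2.3}(ii) (the definition of the $l^p_\infty{}_{(\omega,v)}$-norm specialized to $v=\omega$), is bounded above by $C\|x\|_{l^p_\infty{}_{(\omega)}}$. The estimate is uniform in $N \in \N$, so taking the supremum over $N$ gives $\mathcal{M}x(m) \le C\|x\|_{l^p_\infty{}_{(\omega)}}$, and then taking the supremum over $m \in \Z$ yields $\|\mathcal{M}x\|_{l^\infty} \le C\|x\|_{l^p_\infty{}_{(\omega)}}$, as required.

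There is essentially no serious obstacle here; the result is a straightforward packaging of the $\mathcal{A}_p$-inequality from Proposition \ref{m2.8}. The only small point to handle carefully is to make sure that the bound is written in the form involving $\omega(S_{m,N})^{-1/p}$ so that the supremum can be absorbed into the $l^p_\infty{}_{(\omega)}$-norm uniformly in both $m$ and $N$, which is precisely why the $\mathcal{A}_p$ condition is needed rather than just Hölder's inequality alone.
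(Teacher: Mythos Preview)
Your proof is correct and essentially identical to the paper's: the paper fixes $m^*$, writes $|x(k)|=|x(k)|\omega(k)^{1/p}\omega(k)^{-1/p}$, applies H\"older, and then uses the $\mathcal{A}_p$ condition to convert $|S_{m,N}|^{-1}(\sum\omega^{-p'/p})^{1/p'}$ into $C(\sum\omega)^{-1/p}$, which is exactly the direct route you describe as the alternative to invoking Proposition~\ref{m2.8}. The only cosmetic difference is that the paper enlarges the supremum to run over all $m$ and $N$ at the outset rather than at the end.
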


\begin{proof}
Let $x\in l{^p_\infty}_{(\omiga)}$ and $m^*\in \Z$. Then by H\"older's inequality and the definition of discrete weight $\huaA_p$, we have
\begin{align*}
\M x(m^*)=&\sup\limits_{N\in\N}\frac{1}{2N+1}\sum\limits_{k\in S_{m^*,N}}\left|x(k)\right|\\
\le& \sup\limits_{m\in\Z,N\in\N}\frac{1}{2N+1}\sum\limits_{k\in S_{m,N}}\left|x(k)\right|\omiga(k)^{\frac{1}{p}}\omiga(k)^{-\frac{1}{p}}\\
\le& \sup\limits_{m\in\Z,N\in\N}\frac{1}{2N+1}\bigg(\sum\limits_{k\in S_{m,N}}\left|x(k)\right|^p\omiga(k)\bigg)^\pfzy\bigg(\sum\limits_{k\in S_{m,N}}\omiga(k)^{-\frac{p'}{p}}\bigg)^{\frac{1}{p'}}\\
\le& C\sup\limits_{m\in\Z,N\in\N}\bigg(\sum\limits_{k\in S_{m,N}}\left|x(k)\right|^p\omiga(k)\bigg)^\pfzy\bigg(\sum\limits_{k\in S_{m,N}}\omiga(k)\bigg)^{-\frac{1}{p}}\\
=& C\|x\|_{l{^p_\infty}_{(\omiga)}}.
\end{align*}
We finish the proof of Theorem \ref{t3.1}.
\end{proof}

\begin{theorem}\label{t3.3}
Let $1<p<q<\infty$ and $\omiga$ be a discrete weight on $\Z$.
\begin{itemize}
\item[\rm(i)]  If $\omiga\in\huaA_p$, then $\M$ is bounded  from ${l^p_q}_{(\omega)}$ to ${l^p_q}_{(\omega)}$.
\item[\rm(ii)] If $\M$ is bounded from ${l^p_q}_{(\omega)}$ to ${l^p_q}_{(\omega)}$, then $\omega\in\huaA_q$.
\end{itemize}
\end{theorem}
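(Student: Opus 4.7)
For (i), I use the standard Morrey decomposition: fix $m \in \Z$, $N \in \N$, and split $x = x_1 + x_2$ with $x_1 := x \chi_{S_{m, 2N}}$ and $x_2 := x - x_1$; by sublinearity $\M x \le \M x_1 + \M x_2$. For the local piece $\M x_1$, I invoke the strong $(p,p)$ boundedness of $\M$ on $l^p_\omiga$ for $\omiga \in \huaA_p$ (itself a consequence of Proposition \ref{m2.8} combined with Theorem \ref{th4.1}), bound the resulting norm of $x_1$ by the Morrey norm of $x$ on $S_{m, 2N}$, and use Proposition \ref{m2.9}(i) to pass from $\omiga(S_{m, 2N})$ to $\omiga(S_{m, N})$. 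For the far piece, the key claim is $\M x_2(k) \le C\, \omiga(S_{m, N})^{-1/q} \|x\|_{l^p_{q,(\omiga)}}$ for every $k \in S_{m, N}$. Its proof rests on the observation that if $S_{k, N'}$ meets the complement of $S_{m, 2N}$ then $N' > N$, so $S_{k, N'} \subseteq S_{m, N + N'}$ with $|S_{m, N + N'}| \le 2 |S_{k, N'}|$; H\"older's inequality applied to $\sum_{j \in S_{m, N + N'}} |x(j)|$ via the splitting $\omiga^{1/p} \omiga^{-1/p}$, combined with the $\huaA_p$ control of $\sum \omiga^{-1/(p-1)}$ and the Morrey norm of $x$ on $S_{m, N+N'}$, yields the pointwise bound. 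Multiplying by $\omiga(S_{m, N})^{1/q - 1/p} \omiga(S_{m, N})^{1/p} = \omiga(S_{m, N})^{1/q}$ then finishes (i).

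For (ii), I test with $x := \sigma \chi_J$, where $\sigma := \omiga^{-1/(q - 1)}$ and $J = S_{m_0, N_0}$. Since $S_{m, 2 N_0} \supseteq J$ with $|S_{m, 2 N_0}| \le 2 |J|$ for every $m \in J$, we have $\M x(m) \ge \sigma(J)/(2 |J|)$, and testing the Morrey norm of $\M x$ at $J$ yields the lower bound $\|\M x\|_{l^p_{q,(\omiga)}} \ge \omiga(J)^{1/q} \sigma(J)/(2 |J|)$. For the matching upper bound $\|x\|_{l^p_{q,(\omiga)}} \le \sigma(J)^{1/q}$, I use the identity $\sigma^p \omiga = \sigma^{p - q + 1}$: for any $S_{m, N}$ and $T := S_{m, N} \cap J$, the inequality $\omiga(S_{m, N})^{1/q - 1/p} (\sum_{k \in T} \sigma(k)^{p - q + 1})^{1/p} \le \sigma(T)^{1/q}$ reduces, via a case split on the sign of $p - q + 1$ (H\"older's inequality on $\sum \omiga^{(q - p - 1)/(q - 1)}$ when $p \le q - 1$; the concavity/power-mean bound $\sum \sigma^{p - q + 1} \le |T|^{q - p} \sigma(T)^{p - q + 1}$ when $p \ge q - 1$), to the elementary H\"older inequality $|T|^q \le \omiga(T) \sigma(T)^{q - 1}$. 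Since $\sigma(T) \le \sigma(J)$, this closes the upper bound. Combining the two estimates gives $\omiga(J) \sigma(J)^{q - 1} \le C |J|^q$ for every symmetric interval $J = S_{m_0, N_0}$, and a standard doubling argument (embedding any bounded interval into a symmetric one of at most twice the length) then extends this $\huaA_q$ inequality to all bounded intervals of $\Z$.

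The principal obstacle is the sharp upper bound $\|x\|_{l^p_{q,(\omiga)}} \le C \sigma(J)^{1/q}$ in (ii), whose verification requires the described case split together with the observation that after all algebra it reduces to the trivial H\"older bound $|T|^q \le \omiga(T) \sigma(T)^{q - 1}$. The pointwise estimate on $\M x_2$ in (i) is a secondary technical hurdle, demanding a careful coordination of the $\huaA_p$ condition with the Morrey norm through the two-radius inclusion $S_{k, N'} \subseteq S_{m, N + N'}$.
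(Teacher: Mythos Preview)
Your proof is correct, but both parts take routes different from the paper's. For (i), the paper uses self-improvement (Proposition~\ref{p1.3}) to get $\omega\in\huaA_r$ with $r<p$, derives the pointwise domination $\M x \le C(\M_\omega(|x|^r))^{1/r}$ from Proposition~\ref{m2.8}, and concludes in one line via the Morrey boundedness of $\M_\omega$ (Lemma~\ref{t3.12}); your local/far decomposition avoids Lemma~\ref{t3.12} altogether, though note that your citation ``Proposition~\ref{m2.8} combined with Theorem~\ref{th4.1}'' for the $l^p_\omega$-boundedness of $\M$ is incomplete as written---Proposition~\ref{m2.8} alone gives only $\M x \le C(\M_\omega|x|^p)^{1/p}$, and Theorem~\ref{th4.1}(ii) needs an exponent strictly above~$1$, so you still need Proposition~\ref{p1.3} to drop to some $r<p$ first. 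For (ii), the paper argues indirectly through the chain ${l^p_q}_{(\omega)} \to {wl^p_q}_{(\omega)}$ (Proposition~\ref{p2.2}), then $l^q_\omega \hookrightarrow {l^p_q}_{(\omega)}$ (Proposition~\ref{m2.4}(iii)), and finally Lemma~\ref{4.1}; your direct test with $x=\sigma\kaf_J$, $\sigma=\omega^{-1/(q-1)}$, and the case split on the sign of $p-q+1$ (both cases collapsing to the H\"older bound $|T|^q \le \omega(T)\,\sigma(T)^{q-1}$) is more self-contained and makes the emergence of the $\huaA_q$ exponent explicit. The paper's arguments are shorter on the page because they delegate to auxiliary lemmas; yours are more hands-on and would stand independently of Lemmas~\ref{t3.12} and~\ref{4.1}.
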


\begin{remark}
When $\omiga\equiv 1$, Theorem \ref{t3.3}(i) coincides with \cite[Theorem 3.2]{jdszzlsm}.
\end{remark}

In order to prove Theorem \ref{t3.3}, we need following lemmas.

\begin{lemma}\label{t3.12}
Let $1<p\le q<\infty$, $\omiga\in\huaA_\infty$ and $x\in {l^p_q}_{(\omiga)}$. Then there exists a positive constant $C$ such that
\begin{equation*}
\|\M_\omiga x\|_{{l^p_q}_{(\omiga)}}\le C\|x\|_{{l^p_q}_{(\omiga)}}.
\end{equation*}
\end{lemma}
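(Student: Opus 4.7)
The plan is to decompose $x$ into a local and a far part relative to the interval $S_{m,N}$ and control the two resulting contributions separately. Fix $m \in \Z$ and $N \in \N$, and split $x = x_1 + x_2$ with $x_1 := x \cdot \kaf_{3 S_{m,N}}$ and $x_2 := x - x_1$. Sublinearity gives $\M_\omiga x \le \M_\omiga x_1 + \M_\omiga x_2$, so it suffices to bound each of the two weighted Morrey quantities
\[
\omiga(S_{m,N})^{1/q - 1/p} \bigg(\sum_{k \in S_{m,N}} |\M_\omiga x_i(k)|^p \omiga(k)\bigg)^{1/p}, \quad i = 1, 2,
\]
by $C \|x\|_{{l^p_q}_{(\omiga)}}$ uniformly in $m$ and $N$, and then take the supremum.

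For the local piece, I would apply the weighted $l^p$-boundedness of $\M_\omiga$ from Theorem \ref{th4.1}(ii) (whose proof is driven by the doubling of $\omiga$, available from $\omiga \in \huaA_\infty$ via Proposition \ref{m2.9}) to obtain
\[
\sum_{k \in S_{m,N}} |\M_\omiga x_1(k)|^p \omiga(k) \le C \sum_{j \in 3 S_{m,N}} |x(j)|^p \omiga(j) \le C \,\omiga(3 S_{m,N})^{1 - p/q} \|x\|^p_{{l^p_q}_{(\omiga)}},
\]
where the last inequality comes from the definition of the Morrey norm applied on the interval $3 S_{m,N}$. The doubling inequality $\omiga(3 S_{m,N}) \le C\,\omiga(S_{m,N})$ from Proposition \ref{m2.9} then absorbs the enlargement and closes this case.

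The global piece requires a pointwise estimate of $\M_\omiga x_2(k)$ for $k \in S_{m,N}$. The geometric observation is that $S_{k, N'} \subset 3 S_{m,N}$ whenever $N' \le 2N$, so the supremum defining $\M_\omiga x_2(k)$ runs effectively over $N' > 2N$. For such $N'$, from the chain $S_{m, N' - N} \subset S_{k, N'} \subset S_{m, N + N'}$ together with the inequality $(N+N')/(N'-N) < 3$, Proposition \ref{m2.9} yields $\omiga(S_{m, N+N'}) \le C\, \omiga(S_{k, N'})$. H\"older and the Morrey norm then give
\[
\frac{1}{\omiga(S_{k,N'})} \sum_{j \in S_{k,N'}} |x(j)| \omiga(j) \le \frac{C}{\omiga(S_{m, N+N'})} \bigg(\sum_{j \in S_{m,N+N'}} |x(j)|^p \omiga(j)\bigg)^{1/p} \omiga(S_{m,N+N'})^{1/p'} \le C\,\omiga(S_{m, N+N'})^{-1/q} \|x\|_{{l^p_q}_{(\omiga)}}.
\]
Using $\omiga(S_{m, N+N'}) \ge \omiga(S_{m,N})$ (since $N + N' > N$) delivers the uniform pointwise bound $\M_\omiga x_2(k) \le C \,\omiga(S_{m,N})^{-1/q} \|x\|_{{l^p_q}_{(\omiga)}}$. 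Summing the $p$-th power over $k \in S_{m,N}$ against $\omiga$ and multiplying by $\omiga(S_{m,N})^{1/q - 1/p}$ then exactly cancels the $\omiga(S_{m,N})$-factors and yields $C \|x\|_{{l^p_q}_{(\omiga)}}$.

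The principal obstacle is the quantitative geometric comparability $\omiga(S_{m, N+N'}) \le C\, \omiga(S_{k, N'})$ for $k \in S_{m,N}$ and $N' > 2N$, with constants independent of $m, N, N'$. This is where the choice of dilation factor $3$ in the splitting $3 S_{m,N}$ becomes crucial, since the threshold $(N + N')/(N' - N) < 3$ requires exactly $N' > 2N$; combined with the doubling property guaranteed by $\omiga \in \huaA_\infty$, it closes the argument.
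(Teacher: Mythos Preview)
Your proposal is correct and follows exactly the approach the paper has in mind: the paper omits the proof and points to Komori--Shirai \cite[Theorem 3.1]{jdM}, whose argument is precisely the local/far decomposition you carry out, using Theorem~\ref{th4.1}(ii) for the local piece and H\"older plus doubling (Proposition~\ref{m2.9}(i)) for the far piece. Your parenthetical observation that Theorem~\ref{th4.1}(ii) really only requires doubling (hence $\omiga\in\huaA_\infty$ suffices, even though the theorem is stated for $\omiga\in\huaA_p$) is exactly the point needed to reconcile the hypotheses, and the geometric threshold $N'>2N$ tied to the dilation factor~$3$ is handled cleanly.
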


\begin{proof}
The proof of Lemma \ref{t3.12} is similar to that of the continuous cases (see the proof of \cite[Theorem 3.1]{jdM}) by using discrete H\"older's inequality, Theorem \ref{th4.1}$\rm(ii)$ and Proposition \ref{m2.9}$\rm(i)$. The details being omitted.
\end{proof}

\begin{lemma}\label{4.1}
Let $1<p<q<\infty$ and $\omiga$ be a discrete weight. If $\M$ is a bounded from $l^q_\omiga$ to ${wl^p_q}_{(\omiga)}$, then $\omiga\in\huaA_q$.
\end{lemma}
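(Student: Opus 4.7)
The plan is to use the standard test-function approach: for a fixed interval $J = S_{m,N}$, saturate the weak inequality by choosing $x$ to be the natural competitor for the $\mathcal{A}_q$ extremum, namely $x := \sigma \kaf_{J}$ where $\sigma(k) := \omiga(k)^{-1/(q-1)}$. This choice is designed so that $\M x$ is essentially constant on $J$, forcing the weak-type bound to reveal precisely the $\mathcal{A}_q$ ratio on $J$.

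First I would compute both sides explicitly. Because $\sigma(k)^q \omiga(k) = \omiga(k)^{-q/(q-1)} \omiga(k) = \sigma(k)$, one has
\begin{equation*}
\|x\|_{l^q_\omiga} = \bigg(\sum_{k\in J} \sigma(k)\bigg)^{1/q} = \sigma(J)^{1/q}.
\end{equation*}
On the other hand, averaging over $J$ itself gives $\M x(k) \ge \sigma(J)/|J|$ for every $k\in J$. Setting $\Lambda := \sigma(J)/|J|$, the cube $J = S_{m,N}$ and the level $\lambda = \Lambda/2$ together yield
\begin{equation*}
\omiga(J)^{\frac{1}{q}-\frac{1}{p}}\cdot\frac{\Lambda}{2}\cdot \omiga(J)^{1/p} \le \|\M x\|_{wl^p_q(\omiga)} \le C\|x\|_{l^q_\omiga} = C\sigma(J)^{1/q},
\end{equation*}
since the set $\{k\in S_{m,N}: |\M x(k)|>\lambda\}$ contains $J$ itself. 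Note how the two $\omiga(J)^{1/p}$ factors cancel, eliminating $p$ from the estimate — this is exactly why the weak $l^p_q(\omiga)$ norm, not a stronger norm, is the right quantity to test.

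Rearranging gives $\Lambda \, \omiga(J)^{1/q} \le 2C\,\sigma(J)^{1/q}$; raising to the $q$-th power and substituting $\Lambda = \sigma(J)/|J|$ yields
\begin{equation*}
\bigg(\frac{1}{|J|}\sum_{k\in J}\omiga(k)\bigg) \bigg(\frac{1}{|J|}\sum_{k\in J}\omiga(k)^{-\frac{1}{q-1}}\bigg)^{q-1}\le (2C)^q,
\end{equation*}
which is the $\mathcal{A}_q$ inequality on the interval $J = S_{m,N}$.

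The only remaining point, and arguably the only real subtlety, is that $S_{m,N}$ ranges over intervals of odd cardinality centered at integers, whereas Definition \ref{d2.6} requires the inequality for \emph{every} bounded interval $J\subset\Z$. This is a cosmetic issue: an arbitrary interval $J = \{a,\dots,b\}$ satisfies $J \subseteq \widetilde J := S_{m^\ast,N^\ast}$ for suitable $m^\ast,N^\ast$ with $|\widetilde J|\le |J|+1 \le 2|J|$, so $\omiga(J)\le \omiga(\widetilde J)$, $\sigma(J)\le \sigma(\widetilde J)$, and the $\mathcal{A}_q$-ratio for $J$ is majorized by $2^q$ times the $\mathcal{A}_q$-ratio for $\widetilde J$, which we already control. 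Thus $\omiga\in\mathcal{A}_q$, completing the proof. I expect the algebraic manipulation itself to be routine; the one place to tread carefully is verifying finiteness of $\|x\|_{l^q_\omiga}$ when $\sigma(J)=\infty$ (in which case $\omiga$ vanishes on a positive-measure subset of $J$ and $\mathcal{A}_q$ fails trivially, so the inequality need not be tested).
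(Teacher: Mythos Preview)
Your argument is correct and is exactly the standard test-function proof the paper defers to (the paper gives no details, merely citing the continuous analogue in \cite{wang}). Two minor remarks: since $\M$ is the \emph{centered} maximal operator, for $k\in J=S_{m,N}$ with $k\ne m$ you cannot literally average over $J$ itself, but $S_{k,2N}\supset J$ with $|S_{k,2N}|\le 2|J|$ gives $\M x(k)\ge \sigma(J)/(2|J|)$, which only changes the final constant; and your caveat about $\sigma(J)=\infty$ is moot here, since discrete weights are strictly positive by definition and $J$ is finite.
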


\begin{proof}
The proof of Lemma \ref{4.1} is similar to that of the continuous cases (see the proof of \cite[Theorem 1.1]{wang})
\end{proof}

\begin{proof}[Proof of Theorem \ref{t3.3}]
$\rm(i)$ If $\omiga\in\huaA_p$, then by Proposition \ref{p1.3}, there exists $\epsilon>0$ such that $1<p-\epsilon<p$ and $\omiga\in\huaA_{p-\epsilon}$. Let $r:=p-\epsilon$. By H\"older's inequality and the definition of $\huaA_r$ weights, we obtain
\begin{equation*}\begin{array}{rcl}
\frac{1}{\jssmn}\sum\limits_{k\in \Smn}\left|x(k)\right|&=& \frac{1}{\jssmn}\sum\limits_{k\in \Smn}\left|x(k)\right|\omiga(k)^{\frac{1}{r}}\omiga(k)^{-\frac{1}{r}}\\
&\le& \frac{1}{\jssmn}\left(\sum\limits_{k\in \Smn}\left|x(k)\right|^r\omiga(k)\right)^\frac{1}{r}\left(\sum\limits_{k\in \Smn}\omiga(k)^{-\frac{r'}{r}}\right)^\frac{1}{r'}\\
&\le& C\left(\frac{1}{\omiga(\Smn)}\sum\limits_{k\in \Smn}\left|x(k)\right|^r\omiga(k)\right)^{\frac{1}{r}}.
\end{array}\end{equation*}
Taking the supremum in the last inequality for all $N\in\N$, we obtain
\begin{equation*}
\begin{aligned}
\M x(m)\le & C\sup\limits_{N\in\N}\left(\frac{1}{\omiga(\Smn)}\sum\limits_{k\in\Smn}\left|x(k)\right|^r\omiga(k)
\right)^\frac{1}{r}\\
= & C(\M_\omiga x^r(m))^\frac{1}{r}.
\end{aligned}
\end{equation*}
From this and Lemma \ref{t3.12}, it follows that
\begin{align*}
\omiga(\Smn)^{\frac{1}{q}-\frac{1}{p}}\left(\sum\limits_{k\in S_{m,N}}\left|\M x(k)\right|^p\omiga(k)\right)^\frac{1}{p}
\le& C\omiga(\Smn)^{\frac{1}{q}-\frac{1}{p}}\left(\sum\limits_{k\in S_{m,N}}\left|\M_\omiga x^r(k)\right|^\frac{p}{r}\omiga(k)\right)^\frac{1}{p}\\
\le& C\|\M_\omiga x^r\|{^{\frac{1}{r}}_{l{^\frac{p}{r}_{\frac{q}{r}}}_{(\omiga)}}}
\le C\|x^r\|{^{\frac{1}{r}}_{l{^\frac{p}{r}_{\frac{q}{r}}}_{(\omiga)}}}
=C\|x\|_{l{^p_q}_{(\omiga)}}.
\end{align*}

$\rm(ii)$ If $\M$ is bounded from ${l^p_q}_{(\omiga)}$ to ${l^p_q}_{(\omiga)}$, by Proposition \ref{p2.2}, it follows that $\M$ is also bounded from ${l^p_q}_{(\omiga)}$ to ${wl^p_q}_{(\omiga)}$, which together with Proposition \ref{m2.4}(iii), it follows that $\M$ is also bounded from $l^q_\omiga$ to ${wl^p_q}_{(\omiga)}$. By this and Lemma \ref{4.1}, we obtain $\omiga\in\huaA_q$.
We finish the proof of Theorem \ref{t3.3}.
\end{proof}

\begin{remark}\label{r4.2}
Let $x=\{x(k)\}_{k\in\Z}$ be a sequence, $\omiga'$ be a discrete weight on $\Z$ and let
\begin{equation}\label{eq4.3}
\omiga(k):=
\begin{cases}
|k|^\beta, \quad k\neq 0,\\
~1~~\,, \quad k=0.
\end{cases}
\end{equation}
be a discrete power weight on $\Z$. Then for $1<p<\infty$, $\omiga\in\huaA_p$ if and only if $-1<\beta<{p-1}$ (Continuous case see \cite[Page 141]{shu}). Therefore, when (\ref{eq4.3}) with $-1<\beta<{p-1}$, Theorem \ref{t3.3}(i) still holds true. In particular, when $\omiga'\in\huaA_q$ by Proposition \ref{p1.3}, there exists $\epsilon>0$ such that $\omiga'\in\huaA_{q-\epsilon}$. If $p=q-\epsilon$, then by Theorem \ref{t3.3} we obtain that $\M$ is bounded from ${l^p_q}_{(\omiga')}$ to ${l^p_q}_{(\omiga')}$ if and only if $\omiga'\in\huaA_p$.
\end{remark}

Let $1<p<q$. If $\omiga\in\huaA_p$, then $\M$ is bounded on ${l^p_q}_{(\omiga)}$. Conversely, if $\M$ is bounded on ${l^p_q}_{(\omiga)}$, then $\omiga\in\huaA_q$. However, in the following subsection, when $\omiga$ is discrete power weight on $\Z$ and $\beta$ satisfies certain conditions, we can obtain the sufficient and necessary conditions for the boundedness of $\M$ on ${l^p_q}_{(\omiga)}$.
\subsubsection{The power weight cases}\label{s4.2.2}
\begin{theorem}\label{t4.1}
Let $1<p<q<\infty$ and let
\begin{equation*}
\omiga(k):=
\begin{cases}
|k|^\beta, \quad k\neq 0,\\
~1~~\,, \quad k=0.
\end{cases}
\end{equation*}
The discrete Hardy-Littlewood maximal operator is bounded on ${l^p_q}_{(\omiga)}$ if and only if $-1<\beta<q-1$.
\end{theorem}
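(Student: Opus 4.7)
The plan is to treat necessity and sufficiency separately.

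Necessity follows directly from Theorem~\ref{t3.3}(ii): boundedness of $\M$ on ${l^p_q}_{(\omega)}$ forces $\omega\in\huaA_q$, and by Remark~\ref{r4.2} for the power weight $\omega(k)=|k|^\beta$ this is equivalent to $-1<\beta<q-1$.

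For sufficiency, assume $-1<\beta<q-1$ and split into two subcases. If $-1<\beta<p-1$, then Remark~\ref{r4.2} gives $\omega\in\huaA_p$ and Theorem~\ref{t3.3}(i) yields the desired bound at once. If instead $p-1\le\beta<q-1$, then $\omega\notin\huaA_p$, so Theorem~\ref{t3.3}(i) does not apply directly; but since $\beta+1<q$ I fix an auxiliary exponent $r\in(\max\{p,\beta+1\},q)$, for which $\omega\in\huaA_r$ by Remark~\ref{r4.2}, and Theorem~\ref{t3.3}(i) then gives $\M:{l^r_q}_{(\omega)}\to{l^r_q}_{(\omega)}$ boundedly. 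The plan for transferring this to ${l^p_q}_{(\omega)}$ is a local--nonlocal decomposition adapted to each interval $S=S_{m,N}$: write $x=x_1+x_2$ with $x_1:=x\kaf_{3S}$. For the local piece $\M x_1$, H\"older's inequality with exponent $r/p$ together with the weighted-$l^r_\omega$ boundedness from Theorem~\ref{th4.1}(ii) reduces matters to bounding $\omega(S)^{1/p-1/r}(\sum_{3S}|x|^r\omega)^{1/r}$; the local $l^r_\omega$-mass is then controlled by combining the Morrey bound $\sum_{3S}|x|^p\omega\lesssim\omega(3S)^{1-p/q}\|x\|^p$ with the pointwise estimate $|x(k)|\le\omega(k)^{-1/q}\|x\|_{{l^p_q}_{(\omega)}}$, obtained from the Morrey norm applied to the singleton $S=\{k\}$. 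For the nonlocal piece $\M x_2$, since $x_2$ vanishes on $3S$ every averaging interval $S_{k,L}$ with $k\in S$ meeting $\supp x_2$ must satisfy $L\gtrsim N$; a H\"older estimate on such $S_{k,L}$ together with the convergence of $\sum_\Z\omega(k)^{-1/(p-1)}$ (valid for $\beta>p-1$) then yields a decaying pointwise bound for $\M x_2(k)$, whose summation against $\omega$ on $S$, weighted by $\omega(S)^{p/q-1}$, is uniformly controlled precisely because $\beta<q-1$.

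The main obstacle I anticipate is closing the local-part estimate uniformly in $(m,N)$: the local $l^r_\omega$-mass of $x$ on $3S$ is not controlled by $\|x\|_{{l^p_q}_{(\omega)}}$ alone, and the argument must simultaneously use the Morrey $L^p$-mass bound and the pointwise $l^\infty$-bound coming from the normalization $\omega(0)=1$, with the sharp balance of exponents occurring exactly at $\beta=q-1$. The boundary case $\beta=p-1$ is delicate because $\sum_\Z|k|^{-1}$ diverges, and is handled by choosing $r>p$ strictly in the above argument, exploiting that $\omega\in\huaA_r$ remains valid and that the pointwise bounds on the nonlocal part can be recast using $\sum_\Z\omega^{-1/(r-1)}$-type estimates derived from $\omega\in\huaA_r$ directly.
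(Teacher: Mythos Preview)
Your necessity argument and the sufficiency for $-1<\beta<p-1$ are correct. The argument for $p-1\le\beta<q-1$, however, has a genuine gap in the local part that cannot be repaired along the lines you sketch.

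After H\"older with exponent $r/p$ and the $l^r_\omega$-boundedness of $\M$, what you must control is
\[
\omega(S)^{\frac{1}{q}-\frac{1}{r}}\Big(\sum_{3S}|x|^r\omega\Big)^{\frac{1}{r}}\lesssim\|x\|_{{l^p_q}_{(\omega)}}
\]
uniformly in $S$; this is exactly the embedding ${l^p_q}_{(\omega)}\hookrightarrow {l^r_q}_{(\omega)}$, and it \emph{fails}. Take $\alpha:=q/(q-p)$, $E:=\{\lfloor j^\alpha\rfloor:j\ge 1\}$ and $x(k):=|k|^{-\beta/q}\chi_E(k)$. The spacing of $E$ near $m$ is $\sim m^{p/q}$, and one checks that both on type~I intervals $S_{0,N}$ and on type~II intervals the $l^p_q(\omega)$-Morrey ratio is $\sim 1$, so $\|x\|_{{l^p_q}_{(\omega)}}\sim 1$; but on $S_{0,N}$ the $l^r_q(\omega)$-ratio is $\sim N^{(r-p)/q}\to\infty$. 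Your proposed combination of the Morrey $l^p$-mass bound with the pointwise bound $|x(k)|\le\omega(k)^{-1/q}\|x\|$ only yields $\sum_{3S}|x|^r\omega\lesssim\omega(3S)^{1-p/q}\|x\|^r$, which misses the target $\omega(3S)^{1-r/q}$ by precisely the divergent factor $\omega(S)^{(r-p)/q}$. Your nonlocal argument also breaks: when you replace $p$ by $r>\beta+1$ to treat $\beta=p-1$, the sum $\sum_{\Z}\omega^{-1/(r-1)}=\sum|k|^{-\beta/(r-1)}$ diverges because $\beta<r-1$.

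The paper proceeds differently. Using Proposition~\ref{pro1}(iii), it first restricts the Morrey supremum to type~II intervals $S=S_{m,N}$ with $N\le\lfloor|m|/2\rfloor$, on which $\omega(k)\sim|m|^\beta$ is essentially constant. For the local piece $x_1=x\chi_{2S}$ it then applies the \emph{unweighted} $l^p$-boundedness of $\M$ (Theorem~\ref{1.1}) at exponent $p$ itself:
\[
\sum_{k\in S}|\M x_1(k)|^p|k|^\beta\sim|m|^\beta\sum_{k\in\Z}|\M x_1(k)|^p\le C|m|^\beta\sum_{k\in 2S}|x(k)|^p\sim C\sum_{k\in 2S}|x(k)|^p|k|^\beta,
\]
never leaving the exponent $p$. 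For the nonlocal piece it uses the explicit power-weight averaging estimates of Lemma~\ref{l4.1} rather than $\huaA_r$-type H\"older bounds. The idea you are missing is the reduction to type~II intervals: this makes the weight locally constant, so the unweighted maximal inequality at exponent $p$ suffices and no passage to an auxiliary $r>p$ is needed.
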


To prove Theorem \ref{t4.1}, we need some properties and lemmas. For $1\le p<\infty$ and nonnegative $u$ and $\omiga$, define
\begin{equation*}
\|x\|_{{l^p}_{(u,\omiga)}}:=\sup_{m\in\Z,N\in\Z}\left(\frac{1}{u(S_{m,N})}\sum\limits_{k\in S_{m,N}}|x(k)|^p\omiga(k)\right)^\frac{1}{p}.
\end{equation*}
Lighted by \cite{2020D}, we classify the symmetric interval into three types:
\begin{itemize}
\item[\rm{I}:] symmetric intervals centered at the origin;
\item[\rm{II}:] symmetric intervals $S_{m,N}$ centered at $m\neq 0$ and $N\le\lfloor {\frac{|m|}{2}} \rfloor$, where $\lfloor t \rfloor$ is the nearest integer less than or equal to $t$;
\item[\rm{III}:] symmetric intervals $S_{m,N}$ centered at $m\neq 0$ and $N>\lfloor {\frac{|m|}{2}} \rfloor$.
\end{itemize}

\begin{proposition}\label{pro1}
Let $u,\omiga$ be two discrete weights and let $x=\{x(k)\}_{k\in\Z}$ be a sequence.
\begin{itemize}
\item[\rm(i)] If $u$ is doubling, i.e., there exists a constant $C>0$ such that for any symmetric interval $S_{m,N}$, we have $u(2S_{m,N})\le Cu(S_{m,N})$, we get a value equivalent to $\|x\|_{{l^p}_{(u,\omiga)}}$ by taking the supremum only on symmetric intervals of types $\rm{I}$ and $\rm{II}$.
\item[\rm(ii)] If moreover $u$ satisfies $\sum\limits^\infty_{j=0}u(S_{0,a^jN})\le Cu(S_{0,N})$ for some $a<1$ and $C$ independent of $N$, then we can further restrict the supremum to symmetric intervals of type II.
\item[\rm(iii)] If $1<p<q<\infty$, $\omiga\in\huaA_q$ and $u:=\omiga^{1-\frac{p}{q}}$, then we get a value equivalent to $\|x\|_{{l^p}_{(u,\omiga)}}$ by taking the supremum only on symmetric intervals of type $\rm{II}$.
\end{itemize}
\end{proposition}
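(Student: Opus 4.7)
The plan is to prove the three parts in sequence: (i) shows that type III intervals are absorbed into type I, (ii) decomposes a type I interval using type II intervals, and (iii) applies (i) and (ii) after verifying their hypotheses for $u:=\omiga^{1-p/q}$.

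For (i), the key geometric observation is that any type III interval $S_{m,N}$ (with $m\neq 0$ and $N>\lfloor|m|/2\rfloor$) satisfies $|m|\le 2N+1$, whence $S_{m,N}\subset S_{0,3N+1}$, which is type I, while conversely $S_{0,3N+1}\subset\lambda S_{m,N}$ for a universal $\lambda$ (one may take $\lambda=8$). Iterating the doubling of $u$ a bounded number of times then gives $u(S_{0,3N+1})\le Cu(S_{m,N})$, so
\begin{equation*}
\frac{1}{u(S_{m,N})}\sum_{k\in S_{m,N}}|x(k)|^p\omiga(k)\le\frac{C}{u(S_{0,3N+1})}\sum_{k\in S_{0,3N+1}}|x(k)|^p\omiga(k),
\end{equation*}
dominating every type III average by a type I one.

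For (ii), I would split a type I interval $S_{0,N}$ into annular shells $A_j:=S_{0,\lfloor a^jN\rfloor}\setminus S_{0,\lfloor a^{j+1}N\rfloor}$ and then further into two half-shells in $\Z_+$ and $\Z_-$. For each half, take the type II interval $S_{m_j^\pm,N_j'}$ centered at $m_j^\pm\approx\pm\frac{1+a}{2}a^jN$ with radius $N_j'\approx\frac{1-a}{2}a^jN$. A direct check shows that the constraint $N_j'\le\lfloor|m_j^\pm|/2\rfloor$ holds as soon as $a\ge 1/3$, so these are legitimate type II intervals. Since $S_{m_j^\pm,N_j'}\subset S_{0,\lfloor a^jN\rfloor}$, writing $M$ for the supremum over type II intervals, one gets
\begin{equation*}
\sum_{k\in S_{0,N}}|x(k)|^p\omiga(k)\le 2M\sum_{j\ge 0}u(S_{0,\lfloor a^jN\rfloor})\le CM\,u(S_{0,N})
\end{equation*}
by the decay hypothesis.

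For (iii), I verify both hypotheses for $u:=\omiga^{1-p/q}$. Since $\omiga\in\huaA_q\subset\huaA_\infty$, Proposition \ref{p1.2}(iii) yields $\omiga\in RH_r$ for some $r>1$, and the standard $\huaA_\infty$ characterization via Jensen's inequality and the log-average formulation gives the two-sided estimate
\begin{equation*}
\frac{1}{|I|}\sum_{k\in I}\omiga(k)^{1-p/q}\approx\Bigl(\frac{1}{|I|}\sum_{k\in I}\omiga(k)\Bigr)^{1-p/q}
\end{equation*}
on every bounded interval $I\subset\Z$, so that $u(I)\approx|I|^{p/q}\omiga(I)^{1-p/q}$. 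Doubling of $u$ is then immediate from doubling of $\omiga$. For the decay hypothesis, Proposition \ref{pro3} gives $\omiga(S_{0,\lfloor a^jN\rfloor})\le Ca^{j/r'}\omiga(S_{0,N})$, so
\begin{equation*}
u(S_{0,\lfloor a^jN\rfloor})\approx(a^jN)^{p/q}\omiga(S_{0,\lfloor a^jN\rfloor})^{1-p/q}\le Ca^{j[p/q+(1-p/q)/r']}u(S_{0,N}),
\end{equation*}
a geometric series with ratio strictly less than $1$, hence summable. The hardest step is the covering in (ii): ensuring the discrete constraint $N_j'\le\lfloor|m_j^\pm|/2\rfloor$ forces $a$ to be bounded away from $0$, and the rounding of $a^jN$ to integers must be tracked carefully at the finest scales near the origin so that every $k\in S_{0,N}\setminus\{0\}$ actually lies in one of the constructed type II intervals.
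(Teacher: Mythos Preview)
Your argument tracks the paper's intended strategy. For (i) and (ii) the paper simply defers to \cite[Proposition~2.1]{2020D}, and your containment-plus-doubling for type~III and annular-shell covering for type~I are exactly the discrete versions of that proof. For (iii) there is a small but real difference in route: the paper verifies only the decay hypothesis of (ii), asserting in one line (via Proposition~\ref{pro3}) that $u(S_{0,a^jN})\le Ca^{j(1-p/q)/r'}u(S_{0,N})$, whereas you first prove the two-sided equivalence $u(I)\approx|I|^{p/q}\omega(I)^{1-p/q}$ from the $\mathcal A_\infty$ log-average characterization and then read off both the doubling of $u$ and the geometric decay. Your detour is worthwhile: the paper's direct application of Proposition~\ref{pro3} to $u$ (rather than to $\omega$) leaves that exponent unjustified, and the doubling of $u$ required for (i) is never explicitly checked there.

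One discrete subtlety you flag but do not fully close: the origin lies in no type~II interval (if $0\in S_{m,N}$ then $|m|\le N$, contradicting $N\le\lfloor|m|/2\rfloor$), so for $x=\delta_0$ and $\omega\equiv1$ the type~II supremum vanishes while the full norm does not. Your displayed inequality $\sum_{k\in S_{0,N}}|x(k)|^p\omega(k)\le 2M\sum_{j}u(S_{0,\lfloor a^jN\rfloor})$ therefore needs the left side restricted to $k\neq0$, with the lone term $|x(0)|^p\omega(0)$ absorbed separately (e.g.\ noting $|x(0)|^p\omega(0)\le u(\{0\})^{-1}\cdot$ is not available, but bounding it by the contribution of a nearby type~II singleton together with the $\mathcal A_\infty$ comparability works). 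The paper inherits the same gap from its continuous source, where $\{0\}$ has measure zero; for the application to Theorem~\ref{t4.1} this is harmless, but (ii) and (iii) as stated require the caveat.
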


\begin{proof}
Since the proofs of $\rm(i)$ and $\rm(ii)$ are almost the same as the one of \cite[Proposition 2.1]{2020D}, we omit it. We only prove $\rm(iii)$. In fact, if $\omiga\in\huaA_q$, then by Proposition \ref{p1.2}, there exists a constant $r>1$ such that $\omiga\in RH_r$. From this, Proposition \ref{pro3}, $1<p<q$ and $0<a<1$, it follows that $\sum\limits^\infty_{j=0}u(S_{0,a^jN})\le\sum\limits^\infty_{j=0}a^{j\frac{1}{r'}(1-\frac{p}{q})}u(S_{0,N})\le Cu(S_{0,N})$. Then the assumption of $\rm(ii)$ holds and hence we finish the proof of Proposition \ref{pro1}.

\end{proof}


\begin{proposition}\label{pro2}
Let $1<p<q<\infty$, $\beta>-1$ and let
\begin{equation*}
\omiga(k):=
\begin{cases}
|k|^\beta, \quad k\neq 0,\\
~1~~\,, \quad k=0.
\end{cases}
\end{equation*}
If $\M$ is bounded from ${l^p_q}_{(\omiga)}$ to ${wl^p_q}_{(\omiga)}$, then $-1<\beta<q-1$.
\end{proposition}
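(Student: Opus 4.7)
\bigskip

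\noindent\textbf{Proof plan for Proposition \ref{pro2}.} Since $\beta>-1$ is already in the hypothesis, the task reduces to showing the upper bound $\beta<q-1$. The strategy is to bootstrap the assumed weak-type boundedness on the Morrey space up to a boundedness between a Lebesgue space and a weak Morrey space, apply Lemma~\ref{4.1} to deduce an $\huaA_q$ condition, and then read off the power-weight characterisation.

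The first step I would carry out is to invoke Proposition~\ref{m2.4}(iii), which, under the present hypothesis $1<p<q<\infty$ with $v=\omiga$, provides the continuous embedding $l^q_\omiga\subseteq l{^p_q}_{(\omiga)}$ together with the norm bound $\|x\|_{l{^p_q}_{(\omiga)}}\le\|x\|_{l^q_\omiga}$ for every $x\in l^q_\omiga$. Chaining this with the assumed boundedness of $\M$ from $l{^p_q}_{(\omiga)}$ to $wl{^p_q}_{(\omiga)}$ gives, for any $x\in l^q_\omiga$,
\begin{equation*}
\|\M x\|_{wl{^p_q}_{(\omiga)}}\le C\|x\|_{l{^p_q}_{(\omiga)}}\le C\|x\|_{l^q_\omiga},
\end{equation*}
so that $\M$ is in fact bounded from $l^q_\omiga$ into $wl{^p_q}_{(\omiga)}$. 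At this point Lemma~\ref{4.1} applies directly and yields $\omiga\in\huaA_q$. Finally, as recorded in Remark~\ref{r4.2}, the discrete power weight $\omiga(k)=|k|^\beta$ (with $\omiga(0)=1$) belongs to $\huaA_q$ if and only if $-1<\beta<q-1$; combining this with $\omiga\in\huaA_q$ and the standing hypothesis $\beta>-1$ gives the desired two-sided bound $-1<\beta<q-1$.

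I expect no serious obstacle here, since every ingredient has already been established in earlier parts of the paper; the proof is essentially a clean concatenation of Proposition~\ref{m2.4}(iii), Lemma~\ref{4.1}, and the $\huaA_q$ characterisation of one-dimensional power weights from Remark~\ref{r4.2}. The only point that requires a moment of care is checking that the embedding $l^q_\omiga\subseteq l{^p_q}_{(\omiga)}$ is applied with the correct role of the weight (namely $v=\omiga$), so that Lemma~\ref{4.1}—which is formulated in terms of boundedness on $l^q_\omiga$ rather than on $l{^p_q}_{(\omiga)}$—becomes directly applicable without any change of weight.
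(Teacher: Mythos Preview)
Your proposal is correct and essentially matches the paper's own argument: the paper also obtains $\beta<q-1$ by passing from the assumed weak-type Morrey bound to boundedness from $l^q_\omiga$ into ${wl^p_q}_{(\omiga)}$ via Proposition~\ref{m2.4}(iii), invoking Lemma~\ref{4.1} to deduce $\omiga\in\huaA_q$, and then reading off the power-weight characterisation from Remark~\ref{r4.2}. The paper additionally carries out a test-function computation with $x=\kaf_{S_{0,N}}$ aimed at the lower bound $\beta>-1$, but since that inequality is already part of the hypothesis your omission of it is entirely appropriate.
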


\begin{proof}
Let $x:=\kaf_{S_{0,N}}$ and $S:=S_{m,l}$ with $|m|\ge 1$ and $l\le\lfloor {\frac{|m|}{2}} \rfloor$. We can assume $|m|\le 2N$, otherwise $S_{m,l}\cap S_{0,N}=\varnothing$. For $k\in S$, we have
\begin{equation*}
\M x(k)\ge\frac{1}{|S_{0,3N}|}\sum\limits_{s\in S_{0,3N}}\kaf_{S_{0,N}}(s)=\frac{|S_{0,N}|}{|S_{0,3N}|}\ge\frac{2}{7}.
\end{equation*}
If $t<\frac{2}{7}$, then we have $S\subset\{k\in S: \M x(k)>t\}$. Assuming that $\M$ is bounded from ${l^p_q}_{(\omiga)}$ to ${wl^p_q}_{(\omiga)}$, then from this and Proposition \ref{m2.4}(iii), it follows that
\begin{equation*}
\begin{aligned}
\left(\frac{t^p\omiga(S)}{\omiga(S)^{1-\frac{p}{q}}}\right)^\frac{1}{p}\le
\left(\frac{t^p\omiga(\{k\in S:\M x(k)>t\})}{\omiga(S)^{1-\frac{p}{q}}}\right)^\frac{1}{p}\le C\|\kaf_{S_{0,N}}\|_{{l^p_q}_{(\omiga)}}
\le C\|\kaf_{S_{0,N}}\|_{l^q_\omiga}.
\end{aligned}
\end{equation*}
Let $t$ tend to $\frac{2}{7}$ and since $|k|\sim |m|$ for $k\in S_{m,l}$, we need
\begin{equation}\label{eq4.4}
\begin{aligned}
&\sup\limits_{1\le|m|\le 2N}\sup\limits_{0\le l\le\lfloor {\frac{|m|}{2}} \rfloor}N^{-\frac{\beta+1}{q}}|m|^\frac{\beta}{q}(2l+1)^\frac{1}{q}< \infty\\
\Longleftarrow~~~&\sup\limits_{1\le|m|\le 2N}N^{-\frac{\beta+1}{q}}|m|^\frac{\beta}{q}(2|m|)^\frac{1}{q}< \infty\\
\Longleftarrow~~~& 2^\frac{1}{q}\sup\limits_{1\le|m|\le 2N}N^{-\frac{\beta+1}{q}}|m|^{\frac{1}{q}(\beta+1)}<\infty\\
\stackrel{\text{if}~\beta+1>0}\Longleftarrow~& 2^\frac{1}{q}N^{-\frac{\beta+1}{q}}(2N)^{\frac{\beta+1}{q}}\le C.
\end{aligned}
\end{equation}
Therefore, if $\beta> -1$, (\ref{eq4.4}) holds true.

On the other hand, by the proof of Theorem \ref{t3.3}(ii), we obtain that if $\M$ is bounded from ${l^p_q}_{(u)}$ to ${wl^p_q}_{(u)}$, then $u\in\huaA_q$. Applying this to power weight $\omiga(k)$, we have
\begin{equation*}
\beta<q-1.
\end{equation*}
Combine all of the above estimates, we obtain
\begin{equation*}
-1<\beta<q-1,
\end{equation*}
which completes the proof of Proposition \ref{pro2}.
\end{proof}

The proof of following lemma in continuous version can be found in \cite[Lemma 2.12]{2020D}. The same proof method also works here.

\begin{lemma}\label{l4.1}
Let $1<p<q<\infty$, $x=\{x(k)\}_{k\in\Z}$ be a nonnegative sequence and let
\begin{equation*}
\omiga(k):=
\begin{cases}
|k|^\beta, \quad k\neq 0,\\
~1~~\,, \quad k=0.
\end{cases}
\end{equation*}
\begin{itemize}
\item[\rm(i)] Let $S=S_{m,N}$ be symmetric interval such that $|m|\ge 1$ and $N\le\lfloor {\frac{|m|}{2}} \rfloor$. Then for arbitrary $\beta$ it holds
    \begin{equation}\label{eq4.1}
    \frac{1}{|S_{m,N}|}\sum\limits_{k\in S_{m,N}}x(k)\le C|m|^{-\frac{\beta}{q}}|S_{m,N}|^{-\frac{1}{q}}\|x\|_{{l^p_q}_{(\omiga)}}.
    \end{equation}
\item[\rm(ii)] Let $S=S_{0,N}$ and $\beta>-1$. Then for $\beta<q-1$ it holds
\begin{equation}\label{eq4.2}
\frac{1}{|S_{0,N}|}\sum\limits_{k\in S_{0,N}}x(k)\le C|S_{0,N}|^{-\frac{1+\beta}{q}}\|x\|_{{l^p_q}_{(\omiga)}}.
\end{equation}
\end{itemize}
\end{lemma}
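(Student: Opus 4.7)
The plan is to prove (i) directly by H\"older's inequality, exploiting that on a type II interval the weight $\omiga(k)=|k|^\beta$ is essentially constant, and then to deduce (ii) from (i) by a dyadic annular decomposition around the origin.

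For (i), the key observation is that whenever $N\le\lfloor|m|/2\rfloor$, every $k\in S_{m,N}$ satisfies $|m|/2\le|k|\le 3|m|/2$, so $\omiga(k)\sim|m|^\beta$ uniformly on $S$. Writing $x(k)=x(k)\omiga(k)^{1/p}\cdot\omiga(k)^{-1/p}$ and applying H\"older's inequality with exponents $p,p'$ gives the product of two factors. The first is bounded by $\omiga(S)^{1/p-1/q}\|x\|_{{l^p_q}_{(\omiga)}}$ from the definition of the weighted Morrey norm; the second, using $\omiga(k)^{-p'/p}\sim|m|^{-\beta p'/p}$ throughout $S$, is comparable to $|m|^{-\beta/p}|S|^{1/p'}$. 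Collecting terms via $\omiga(S)\sim|m|^\beta|S|$ and $1/p+1/p'=1$ yields
\[
\sum_{k\in S}x(k)\le C\,|m|^{-\beta/q}|S|^{1-1/q}\|x\|_{{l^p_q}_{(\omiga)}},
\]
which is exactly (\ref{eq4.1}) after dividing by $|S|$.

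For (ii), the idea is to decompose $S_{0,N}\setminus\{0\}$ into dyadic annuli and apply (i) on each. For each $j\ge 1$ with $2^{j-1}\le N$, let $A_j^\pm:=\{k\in S_{0,N}:\pm k\in[2^{j-1},2^j)\}$. Each $A_j^\pm$ embeds into a type II symmetric interval centered at $\pm 3\cdot 2^{j-2}$ with half-width $\sim 2^{j-2}$ whose cardinality is comparable to $|A_j^\pm|\sim 2^{j-1}$, so (i) applies and produces
\[
\sum_{k\in A_j^\pm}x(k)\le C\cdot 2^{j(q-1-\beta)/q}\|x\|_{{l^p_q}_{(\omiga)}}.
\]
Because $\beta<q-1$, the exponent $(q-1-\beta)/q$ is positive, so the geometric series over $1\le j\lesssim\log_2 N$ is dominated by its last term, of order $N^{(q-1-\beta)/q}=N^{1-(1+\beta)/q}$. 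The isolated contribution at $k=0$ is bounded by $|x(0)|\le\|x\|_{{l^p_q}_{(\omiga)}}$, obtained by applying the norm to the singleton $S_{0,0}=\{0\}$. Summing everything and dividing by $|S_{0,N}|\sim N$ gives (\ref{eq4.2}), since $|S_{0,N}|^{-(1+\beta)/q}\sim N^{-(1+\beta)/q}$.

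The main obstacle is the bookkeeping in (ii): one must check that the dyadic annuli really do fit inside type II symmetric intervals of comparable cardinality so that the exponents coming from (i) are the correct ones, and then verify that the geometric series is controlled by its largest term precisely under the hypothesis $\beta<q-1$ (the strict inequality is essential here, as the borderline case would produce an extra logarithmic factor). The hypothesis $\beta>-1$ plays no explicit role in the estimation; it merely guarantees that $\omiga$ is a positive weight with the expected growth $\omiga(S_{0,N})\sim N^{\beta+1}$ used implicitly to interpret the norm.
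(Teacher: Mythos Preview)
Your proposal is correct and follows exactly the route the paper points to (it cites \cite[Lemma 2.12]{2020D} and says ``the same proof method also works here''): H\"older's inequality together with $|k|\sim|m|$ on a type~II interval for (i), and a dyadic annular decomposition of $S_{0,N}$ reducing (ii) to (i), with the condition $\beta<q-1$ entering precisely to sum the resulting geometric series. Your observation that $\beta>-1$ is not explicitly used in the argument is accurate; the minor bookkeeping of embedding each annulus $\{2^{j-1}\le|k|<2^j\}$ into a genuine type~II interval $S_{m_j,N_j}$ with $|m_j|\sim|S_{m_j,N_j}|\sim 2^j$ (e.g.\ $m_j=3\cdot 2^{j-2}$, $N_j=2^{j-2}$ for $j\ge 2$) works exactly as you indicate.
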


Now we shall prove Theorem \ref{t4.1}.

\begin{proof}[Proof of Theorem \ref{t4.1}]
When $-1<\beta<q-1$, by Remark \ref{r4.2}, we have $\omiga\in\huaA_q$. Then we can limit ourselves to consider symmetric intervals of the form $S:= S_{m,N}$ with $|m|\ge 1$ and $N\le\lfloor {\frac{|m|}{2}} \rfloor$ (see Proposition \ref{pro1}(iii)). The necessity holds by Proposition \ref{pro2}. For the sufficiency we let $x=\{x(k)\}_{k\in\Z}$ be a nonnegative sequence, and let $x=x_1+x_2$, where $x_1:=x\kaf_{2S_{m,N}}$. Then $\M x(k)\le\M x_1(k)+\M x_2(k)$.

To deal with $\M x_1$, by $|k|\sim |m|$ for $k\in S_{m,N}$ and Theorem \ref{1.1}, we obtain
\begin{equation*}
\sum\limits_{k\in S_{m,N}}\M x_1(k)^p|k|^\beta\le |m|^\beta\sum\limits_{k\in\Z}\M x_1(k)^p\le C|m|^\beta\sum\limits_{k\in 2S_{m,N}}x(k)^p\le C\sum\limits_{k\in 2S_{m,N}}x(k)^p|k|^\beta.
\end{equation*}

On the other hand, since there exists a symmetric interval $S'=S_{m',N'}\supset S=S_{m,N}$ such that
\begin{equation}\label{eq4.6}
\frac{1}{2}\M x_2(k)<\frac{1}{|S'|}\sum\limits_{j\in S'}x_2(j)\quad\text{for all}\,~k\in S.
\end{equation}

If $S'$ is a symmetric interval of type $\rm{II}$, i.e., $|m'|\ge 1$ and $N'\le\lfloor {\frac{|m'|}{2}} \rfloor$, then we have $|m'|\sim |m|$. From this, (\ref{eq4.6}) and (\ref{eq4.1}), it follows that
\begin{equation}\label{eq4.7}
\begin{aligned}
&\omiga(S_{m,N})^{\frac{p}{q}-1}\sum\limits_{k\in S_{m,N}}\M x_2(k)^p\omiga(k)\sim \left(|m|^\beta(2N+1)\right)^{\frac{p}{q}-1}\sum\limits_{k\in S_{m,N}}\M x_2(k)^p|k|^\beta\\
\le &2^p|m|^{\frac{p\beta}{q}-\beta}(2N+1)^{\frac{p}{q}-1}|m|^\beta(2N+1)\left(\frac{1}{|S_{m',N'}|}\sum\limits_{j\in S_{m',N'}}x_2(j)\right)^p\\
\le &C|m|^\frac{p\beta}{q}(2N+1)^\frac{p}{q}|m'|^{-\frac{p\beta}{q}}|2N'+1|^{-\frac{p}{q}}\|x\|^p_{{l^p_q}_{(\omiga)}}\\
\le &C|m|^\frac{p\beta}{q}(2N+1)^\frac{p}{q}|m|^{-\frac{p\beta}{q}}|2N+1|^{-\frac{p}{q}}\|x\|^p_{{l^p_q}_{(\omiga)}}\\
\le &C\|x\|^p_{{l^p_q}_{(\omiga)}}.
\end{aligned}
\end{equation}
Taking the supremum in (\ref{eq4.7}) for all symmetric interval of type $\rm{II}$, then we have
\begin{equation*}
\|\M x_2\|_{{l^p_q}_{(\omiga)}}\le C\|x\|_{{l^p_q}_{(\omiga)}}.
\end{equation*}

If $S'$ is a symmetric interval of type $\rm{I}$, i.e., $S'=S_{0,N'}$, then by (\ref{eq4.6}), (\ref{eq4.2}) with $-1<\beta<q-1$ and $|m|\le N'$, we have
\begin{equation}\label{eq4.8}
\begin{aligned}
&\omiga(S_{m,N})^{\frac{p}{q}-1}\sum\limits_{k\in S_{m,N}}\M x_2(k)^p\omiga(k)\sim\frac{1}{(|m|^\beta(2N+1))^{1-\frac{p}{q}}}\sum\limits_{k\in S_{m,N}}\M x_2(k)^p|k|^\beta\\
\le & C\left(|m|^\beta(2N+1)\right)^{\frac{p}{q}-1}|m|^\beta(2N+1)(2N'+1)^{-\frac{p}{q}(1+\beta)}\|x\|^p_{{l^p_q}_{(\omiga)}}\\
= & C\left(\frac{|m|}{2N'+1}\right)^{\frac{p}{q}\beta}\left(\frac{2N+1}{2N'+1}\right)^{\frac{p}{q}}\|x\|^p_{{l^p_q}_{(\omiga)}}\\
\le & C\left(\frac{|m|}{2N'+1}\right)^{\frac{p}{q}\beta}\left(\frac{3|m|}{2N'+1}\right)^{\frac{p}{q}}\|x\|^p_{{l^p_q}_{(\omiga)}}\\
\le & C\|x\|^p_{{l^p_q}_{(\omiga)}}.
\end{aligned}
\end{equation}
Taking the supremum in (\ref{eq4.8}) for all symmetric interval of type $\rm{II}$, then we have
\begin{equation}\label{eq4.9}
\|\M x_2\|_{{l^p_q}_{(\omiga)}}\le C\|x\|_{{l^p_q}_{(\omiga)}}.
\end{equation}

If $S'$ is a symmetric interval of type $\rm{III}$, i.e., $|m'|\ge 1$ and $N'>\lfloor {\frac{|m'|}{2}} \rfloor$, then we have $S_{m',N'}\subset S_{0,|m'|+N'}\subset S_{0,3N'}$ and
\begin{small}
\begin{equation*}
\frac{1}{2}\M x_2(k)<\frac{1}{|S_{m',N'}|}\sum\limits_{j\in S_{m',N'}}x(j)\le\frac{|S_{0,3N'}|}{|S_{m',N'}|}\frac{1}{|S_{0,3N'}|}\sum\limits_{j\in S_{0,3N'}}x(j)\le\frac{C}{|S_{0,3N'}|}\sum\limits_{j\in S_{0,3N'}}x(j), k\in S.
\end{equation*}
\end{small}\\
Therefore, repeating the estimate of (\ref{eq4.9}) with the fact that $S_{0,3N'}$ is a symmetric interval of type $\rm{I}$, we may obtain $\|\M x_2\|_{{l^p_q}_{(\omiga)}}\le C\|x\|_{{l^p_q}_{(\omiga)}}$ and hence finish the proof of Theorem \ref{t4.1}.
\end{proof}

\begin{remark}
For another different weighted Morrey spaces as follows: Let $\omiga\in L_{loc}(\R^n)$ be a weight, $p\in (0,\infty)$ and $\lambda\in [0,n)$ be two real parameters. The weighted Morrey space $L^{p,\lambda}_{\omiga}(\R^n)$ is the set of $f\in L^p_{loc}(\R^n)$ satisfying
\begin{equation*}
\|f\|_{L^{p,\lambda}_\omiga}=\sup\limits_{Q\in{\mathcal Q}}\left(\frac{1}{l(Q)^\lambda}\int_Q|f(x)\omiga(x)|^pdx\right)^\frac{1}{p}<\infty.
\end{equation*}
Tanaka \cite{2015t} and Nakamura-Sawano-Tanaka \cite{2018fsc} discussed the necessary and sufficient conditions for the boundedness of the Hardy-Littlewood maximal operator, the fractional Hardy-Littlewood maximal operator and the fractional integral operator on weighted Morrey spaces $L^{p,\lambda}_\omiga$. We will consider whether these conclusions still hold true under discrete setting in our next article.
\end{remark}

\subsection*{Acknowledgment}

The authors would like to express their deep gratitude to the referees for their meticulous work and useful comments which do improve Theorems \ref{t3.3} and \ref{t4.1}, and the presentation of this article.

\bigskip\medskip

\noindent Xuebing Hao, Shuai Yang and Baode Li (Corresponding author),
\medskip

\noindent College of Mathematics and System Sciences\\
 Xinjiang University\\
 Urumqi, 830017\\
P. R. China
\smallskip

\noindent{E-mail }:\\
\texttt{1659230998@qq.com} (Xuebing Hao)\\
\texttt{2283721784@qq.com} (Shuai Yang)\\
\texttt{baodeli@xju.edu.cn} (Baode Li)\\
\bigskip \medskip


\begin{thebibliography}{30}
\vspace{-0.3cm}
\bibitem{2022Abe}
Y. Abe and Y. Sawano, Littlewood-Paley characterization of discrete Morrey spaces and its application to the discrete martingale transform, to appear in Le Matematiche.

\vspace{-0.3cm}
\bibitem{2022A}
R. A. Aliev, A. N. Ahmadova and A. F. Huseynli, Boundedness of the discrete Ahlfors-Beurling transform on discrete Morrey spaces, Inst. Math. Mech., 2022, 48(1): 123-131.

\vspace{-0.3cm}
\bibitem{2021A}
A. R. Avazaga and A. A. Nofel, Boundedness of discrete Hilbert transform on discrete Morrey spaces, Ufa Math. J., 2021, 13(1): 98-109.

\vspace{-0.3cm}
\bibitem{wiener20}
A. B\"ottcher and M. Seybold, Discrete Wiener-Hopf operators on spaces with Muckenhoupt weight, Studia Math., 2000, 143(2): 121-144.

\vspace{-0.3cm}
\bibitem{1987}
F. Chiarenza and M. Frasca, Morrey spaces and Hardy-Littlewood maximal function, Rend. Mat. Apple., 1987, 7(7): 273-279.

\vspace{-0.3cm}
\bibitem{shu}
J. Duoandikoetxea, Fourier Analysis, Grad. Stud. Math, Providence, 2001.

\vspace{-0.3cm}
\bibitem{2020D}
J. Duoandikoetxea and M. Rosenthal, Boundedness properties in a family of weighted Morrey spaces with emphasis on power weights, J. Funct. Anal., 2020, 279(8): 108687, 26.

\vspace{-0.3cm}
\bibitem{fu}
X. Fu, Weighted boundedness of discrete fractional integrals, Sci Sin Math, 2021, 51(2): 333-342.

\vspace{-0.3cm}
\bibitem{norm}
J. Garcia-Cuerva and J. L. Rubio de Francia, Weighted Norm Inequalities and Related Topics, Elsevier, New York, 2011.

\vspace{-0.3cm}
\bibitem{lsmyxz18}
H. Gunawan, E. Kikianty and C. Schwanke, Discrete Morrey spaces and their inclusion properties, Math. Nachr., 2018, 291(8-9): 1283-1296.

\vspace{-0.3cm}
\bibitem{jdszzlsm}
H. Gunawan and C. Schwanke, The Hardy-Littlewood maximal operator on discrete Morrey spaces, Mediterr. J. Math., 2019, 16(1):1-12.

\vspace{-0.3cm}
\bibitem{2020H}
D. D. Haroske and L. Skrzypczak, Morrey sequence spaces: Pitt's theorem and compact embeddings, Constr Approx, 2020, 51(3): 505-535.

\vspace{-0.3cm}
\bibitem{2019K}
E. Kikianty and C. Schwanke, Discrete Morrey spaces are closed subspaces of their continuous counterparts, Banach Center Publications, 2019, 119: 223-231.

\vspace{-0.3cm}
\bibitem{jdM}
Y. Komori and S. Shirai, Weighted Morrey spaces and a singular integral operator, Math. Nachr., 2009, 282(2): 219-231.

\vspace{-0.3cm}
\bibitem{lu}
S. Lu, Y. Ding and D. Yan, Singular Integrals and Related Topics, World Scientific, Singapore, 2007.

\vspace{-0.3cm}
\bibitem{dimorrey}
C. B. Morrey, On the solutions of quasi-linear elliptic partial differential equations, TAMS, 1938, 43(1): 126-166.

\vspace{-0.3cm}
\bibitem{2018fsc}
S. Nakamura, Y. Sawano and H. Tanaka, The fractional operators on weighted Morrey spaces. J. Geom. Anal., 2018, 28(2): 1502-1524


\vspace{-0.3cm}
\bibitem{lsq}
S. H. Saker and R. P. Agarwal, Theory of discrete Muckenhoupt weights and discrete Rubio de Francia extrapolation theorems, Appl. Anal. Discrete Math., 2021, 15(2): 295-316.

\vspace{-0.3cm}
\bibitem{2019sa}
S. H. Saker and I. Kubiaczyk, Higher summability and discrete weighted Muckenhoupt and Gehring type inequalities, Proc. Edinb. Math. Soc. (2), 2019, 62(4): 949-973.

\vspace{-0.3cm}
\bibitem{2021sa}
S. H. Saker and R. R. Mahmoud, Boundedness of both discrete Hardy and Hardy-Littlewood maximal operators via Muckenhoupt weights, Rocky Mountain J. Math., 2021, 51(2): 733-746.

\vspace{-0.3cm}
\bibitem{lsq2}
S. H. Saker, S. S. Rabie and J. Alzabut et al, Some basic properties and fundamental relations for discrete Muckenhoupt and Gehring classes, Adv. Difference Equ., 2021, 2021(1): 1-22.

\vspace{-0.3cm}
\bibitem{bzb}
A. S. Swarup and A. M. Alphonse, The boundedness of fractional Hardy-Littlewood maximal operator on variable $l^p$ spaces using Calder\'on-Zygmund decomposition, 2022, arXiv:2204.04331.

\vspace{-0.3cm}
\bibitem{2015t}
H. Tanaka, Two-weight norm inequalities on Morrey spaces, Ann. Acad. Sci. Fenn. Math., 2015, 40(2): 773-791.

\vspace{-0.3cm}
\bibitem{wang}
D. H. Wang, J. Zhou and W. Y. Chen, Another characterizations of Muckenhoupt $\huaA_p$ class, Acta Math. Sci., 2017, 37(6): 1761-1774.

\vspace{-0.3cm}
\bibitem{2023W}
M. Wei and X. Liu, Sharp weak bounds for discrete Hardy operator on discrete central Morrey spaces, AIMS Math., 2023, 8(2): 5007-5015.

\end{thebibliography}
\end{document}